%
%
%
%
%
\documentclass{amsart}
\usepackage[margin=3cm]{geometry}

\usepackage{graphicx}
\usepackage{xcolor} 
\usepackage{hyperref}
\theoremstyle{plain} 
\newtheorem{thm}{Theorem}[section] 
 
\newtheorem{lem}[thm]{Lemma}

\newtheorem{defn}[thm]{Definition}

\numberwithin{equation}{section}

\newcommand{\R}{\ensuremath{\mathbb{R}}}
\newcommand{\N}{\ensuremath{\mathbb{N}}}
\newcommand{\eee}{\ensuremath{\varepsilon}}

\newcommand{\ig}{\ensuremath{ \frac{\sin \pi s}{ \pi}}}
\newcommand{\igs}{\ensuremath{ \frac{1}{\Gamma(1-s)}}}
\newcommand{\al}{\ensuremath{ &\;}}

\newcommand{\lr}[1]{\left( #1 \right)}
\newcommand{\lrq}[1]{\left[#1 \right]}
\newcommand{\eqlab}[1]{\begin{equation}  \begin{aligned}#1 \end{aligned}\end{equation}}
\newcommand{\bgs}[1]{\begin{equation*} \begin{aligned}#1\end{aligned}\end{equation*}}
  \newcommand{\sys}[2][]{\begin{equation*}#1  \left\{\begin{aligned}#2\end{aligned}\right.\end{equation*}}

\begin{document}
\title[Local density of Caputo-stationary functions]{Local density of Caputo-stationary functions in the space of smooth functions}
\thanks{I am greatly indebted to Professor Enrico Valdinoci for his guidance and his precious help. I also sincerely thank Milosz Krupski for his observations, and the reviewers for their  very useful suggestions.}
\author{Claudia Bucur}
\keywords{Caputo stationary, fractional derivative.}
\email{claudia.bucur@unimi.it}
\address{Claudia Bucur: Dipartimento di Matematica\\ Universit\`a degli Studi di Milano \\ Via Cesare Saldini, 50 \\ 20100, Milano-Italy}
\date{}
\begin{abstract} We consider the Caputo fractional derivative and say that a function is Caputo-stationary if its Caputo derivative is zero. We then prove that any $C^k\big([0,1]\big)$ function can be approximated in $[0,1]$ by a function that is Caputo-stationary in $[0,1]$, with initial point $a<0$. Otherwise said, Caputo-stationary functions are dense in $C^k_{loc}(\R)$. 
\end{abstract}
%
%
\subjclass{26A33, 34K37}
\keywords{Caputo stationary, fractional derivative, nonlocal operators.}
\maketitle
\tableofcontents
\section*{Introduction}

The interest in fractional calculus has increased in the last decades given its numerous applications in viscoelasticity, signal processing, anomalous diffusion, biology, geomorphology, materials science, fractals and so on. Nevertheless, fractional calculus is a classical argument, studied since the end of the seventeenth century by many great mathematicians like Leibniz (perhaps he was the first to mention it in a letter to L'H\^{o}pital), Euler, Lagrange, Laplace, Lacroix, Fourier, Abel, Liouville, Heaviside, Weyl, Hadamard, Riemann and so on (see \cite{MR93} for an interesting time-line history). 

One can find several definitions of fractional derivatives in the literature, just to name a few, the Riemann-Liouville, the Caputo, the Riesz, the Hadamard fractional derivative, or the generalization given by the Erdélyi-Kober operator (see \cite{KST06}, \cite{MR93} and \cite{SK93} for more details on fractional integrals, derivatives and applications). The spotlight in this paper is the Caputo derivative, introduced by Michele Caputo in \cite{C67} in the late sixties. 

The Caputo fractional derivative is a so-called nonlocal operator, that models long-range interactions. For instance, if we think of a function depending on time, the Caputo fractional derivative would represent a memory effect, pointing out that the state of a system at a given time depends on past events.  In other words, the Caputo derivative describes a causal system (also known as a non-anticipative system).   

This nonlocal character of the Caputo derivative gives rise to a peculiar behavior: on a bounded interval, say $[0,1]$, one can find a Caputo-stationary function ``close enough'' to any smooth function, without any geometrical constraints. This is a surprising result when one thinks of the rigidity of the classical derivatives. For instance, the functions with null first derivative are constant functions,  the functions with null second derivatives are affine functions. Such functions cannot approximate locally any given $C^k$ function, for any fixed $k\in \N_0$.

Let $a \in \R$ and $s \in (0,1)$ be two arbitrary parameters. We define the functional space \textcolor{black}{
\eqlab{	 \label{ca1s} C_a^{1,s}  := \Big\{ f   \colon \R \to \R \mbox{ s.t. for any } x>a,  \;  f \in AC\big([a,x]\big) 
	\mbox{ and } \displaystyle & f'(\cdot){(x-\cdot)^{-s}} \in L^1\big( (a, x)\big)  \Big\} .}}
We denote here by $AC(I)$ the space of absolutely continuous functions on $I$. \textcolor{black}{ Moreover, we recall the Gamma function (see Chapter 6.1 in \cite{AS64} for other details), defined for $z>0$ as
\[ \Gamma(z) := \int_0^{+\infty} t^{z-1}e^{-t}\, dt.\] } 
We define now the Caputo derivative.
\begin{defn}
The Caputo derivative of $u\in C_a^{1,s}$ with initial point $a\in \R$ at the point $x>a$ is given by
	\begin{equation} \label{caputo}
		D^s_a u(x):= \displaystyle \frac{1}{\Gamma(1-s)}\int_a^x u'(t)(x-t)^{-s}\, dt .
		\end{equation}
\end{defn}
\noindent We define a Caputo-stationary function as follows.
\begin{defn}
We say \textcolor{black}{that} $u\in C_a^{1,s}$  is Caputo-stationary with initial point $a\in \R$  \textcolor{black}{at the point} $x>a$   if
	\bgs{  \label{caph}
	 	D^s_a u(x)=0.} 
\textcolor{black}{Let $I$ be an interval such that $a\leq \inf I$.} We say \textcolor{black}{that} $u$ is Caputo-stationary with initial point $a$ in $I$ if $D_a^su(x)=0$ holds for any $x \in I$.
\end{defn}

\textcolor{black}{For $k\in \N_{0}$}, we consider $C^k\lr{[0,1]}$ to be the space of the $k$-times continuous differentiable functions on $[0,1]$, endowed with the $C^k$-norm 
\[ \|f\|_{C^k\lr{[0,1]}} =\sum_{i=0}^k \sup_{x\in [0,1]}|f^{(i)}(x)|.\] The main result that we prove here is that for any fixed $k \in \N_0$, given any $C^k\big([0,1]\big)$ function, there exists an initial point $a<0$ and a Caputo-stationary function with initial point $a$, that in $[0,1]$ is arbitrarily close (in the $C^k$ norm) to the given function. More precisely:

\begin{thm}\label{thm:thm1}
\textcolor{black}{Let $k\in \N_0$ and $s\in (0,1)$ be two arbitrary parameters .} Then for any $f \in C^k\big([0,1]\big)$ and any $\eee>0$ there exists an initial point $a<0$ and a function $u\in C^{1,s}_a $ such that 
	\[ D_a^s u(x)=0 \text{ in } [0,\infty) \]and 
	\[\| u-f\|_{C^k\big([0,1]\big)} < \eee.\]	
\end{thm}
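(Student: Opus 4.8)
The plan is to build the approximating Caputo-stationary function explicitly as a power series. The key observation is that the Caputo derivative acts nicely on monomials: for $\beta > 0$ one computes
\[ D_a^s (x-a)^\beta = \frac{\Gamma(\beta+1)}{\Gamma(\beta+1-s)}(x-a)^{\beta-s}, \]
and $D_a^s$ of a constant is zero. So if I look for $u(x) = \sum_{j=0}^\infty c_j (x-a)^{j}$ (say, analytic on a large enough interval), then $D_a^s u(x) = \sum_{j=1}^\infty c_j \frac{\Gamma(j+1)}{\Gamma(j+1-s)}(x-a)^{j-s}$, and requiring this to vanish identically on $[0,\infty)$ forces $c_j = 0$ for all $j \geq 1$, which is useless. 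The fix, which the nonlocality makes possible, is to use \emph{non-integer} powers: look for a Caputo-stationary function of the form $u(x) = b_0 + \sum_{j\geq 1} b_j (x-a)^{\alpha_j}$ where the exponents $\alpha_j$ are chosen so the image exponents $\alpha_j - s$ are themselves among the $\alpha_i$, allowing nontrivial cancellation — OR, more cleanly, to reverse the logic: fix the \emph{target} shape on $[0,1]$ first and solve for $u$ on all of $[a,\infty)$.

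Concretely, here is the route I would actually take. Since $f\in C^k([0,1])$, by Weierstrass I may first replace $f$ by a polynomial $P$ with $\|P - f\|_{C^k([0,1])} < \eee/2$; so it suffices to approximate polynomials. Now I want a Caputo-stationary $u$ (initial point $a<0$, to be chosen) with $\|u - P\|_{C^k([0,1])} < \eee/2$. The Caputo-stationarity equation $D_a^s u \equiv 0$ on $[0,\infty)$ is, after the substitution $t \mapsto a + (x-a)\tau$, an Abel-type integral equation; its solutions are exactly the functions whose Riemann--Liouville derivative of order $s$ relative to $a$ vanishes, i.e. (on $(a,\infty)$) functions of the form $u(x) = \text{const}$ plus something orthogonal in the appropriate sense — but the right way to see the solution space is: $D_a^s u = 0$ iff $u'(t) = $ (by Abel inversion of $D_a^{s}u = 0$, viewing $D_a^s u$ as $I_a^{1-s}$ applied to $u'$ up to constants) $u'(t) = C (t-a)^{s-1}$ for a constant $C$? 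No — that would give only a two-parameter family. The genuine point is that on the \emph{unbounded} interval $[a,\infty)$ there is a rich infinite-dimensional family, because the natural basis of Caputo-stationary functions is $\{1\} \cup \{(x-a)^{\beta} : \beta \in E\}$ for suitable sets $E$ built so that applying $D_a^s$ maps the span into itself with a kernel. I would therefore construct, for each $n$, an explicit Caputo-stationary function $\varphi_n$ behaving like $(x-a)^n$ — more precisely a ``Caputo-harmonic extension'' of the monomial — and then take $u = \sum_n \lambda_n \varphi_n$ matching the coefficients of $P$, controlling the tail in $C^k([0,1])$.

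The cleanest concrete construction I expect to use: for $\gamma > 0$ non-integer, define the entire function
\[ e_{s,\gamma}(x) = \sum_{m=0}^\infty \frac{(-1)^m}{\Gamma(\gamma - s m + \text{(shift)})}\,x^{\gamma - sm}\cdots \]
— essentially a Mittag-Leffler-type function designed so that $D_a^s$ shifts the exponent by $-s$ and the alternating signs telescope the series to $0$. Equivalently, pick the exponents to be $\{\gamma, \gamma+s, \gamma+2s,\dots\}$: set $w(x)=\sum_{m\ge 0} c_m (x-a)^{\gamma+ms}$ with $c_m$ chosen recursively so that $D_a^s w = 0$, i.e. $c_{m+1}\frac{\Gamma(\gamma+(m+1)s+1)}{\Gamma(\gamma+ms+1)} = -c_m\frac{\Gamma(\gamma+ms+1)}{\Gamma(\gamma+(m-1)s+1)}$-type relation forcing a Mittag-Leffler growth that is still entire. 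Then $w$ restricted to a neighborhood of a fixed point $x_0 \in (0,1)$ can, by choosing $\gamma$ and finitely many such building blocks (for $\gamma = \gamma_0, \gamma_0+1, \dots, \gamma_0 + N$ with $\gamma_0$ small and irrational), reproduce the Taylor polynomial of $P$ at $x_0$ up to order $k$, because the matrix of derivatives $(\varphi_i^{(j)}(x_0))_{0\le i,j\le k}$ is invertible (a generalized Wronskian / Vandermonde-type argument with the distinct non-integer exponents). Since $\varphi_i \in C^\infty((a,\infty))$ and the approximation of $P$ and its first $k$ derivatives at the single point $x_0$ propagates to a $C^k([0,1])$ estimate once we also know the higher derivatives of $u$ stay bounded — that last part needs the $a<0$ freedom: pushing $a \to -\infty$ rescales and flattens the building blocks, making their $C^k([0,1])$-behavior tunable.

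I expect the main obstacle to be exactly this last coupling: one must simultaneously (i) make $D_a^s u \equiv 0$ on all of $[0,\infty)$ (a rigid global constraint), (ii) match $k+1$ derivative-values at a point, and (iii) keep the $C^k([0,1])$ error small despite (i)+(ii) fixing a potentially wildly oscillating or fast-growing function. The resolution I anticipate is a scaling argument: if $u$ is Caputo-stationary with initial point $a$, then $x\mapsto u(a + \lambda(x-a))$ is Caputo-stationary with initial point $a$ as well (the equation is scale-covariant), so sending $a\to -\infty$ while rescaling lets the ``interesting'' part of $u$ live far to the left and its restriction to $[0,1]$ be as flat — hence as close to an arbitrary prescribed jet — as desired. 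Making that quantitative, i.e. showing the generalized Wronskian stays bounded away from $0$ uniformly as $a\to-\infty$ while the tail/higher-order terms decay, is the technical heart; everything else (the monomial formula for $D_a^s$, the Weierstrass reduction, the recursion defining the building blocks and its convergence) is routine.
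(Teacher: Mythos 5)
Your Weierstrass reduction to polynomials matches the paper's first step, and your formula $D_a^s(x-a)^\beta=\frac{\Gamma(\beta+1)}{\Gamma(\beta+1-s)}(x-a)^{\beta-s}$ is correct for $\beta>0$. The gap is in your central construction: there is no ``rich infinite-dimensional family'' of functions with $D_a^s\varphi\equiv 0$ on all of $(a,\infty)$, because the kernel of $D_a^s$ on the whole half-line is exactly the constants. Indeed $D_a^su=0$ on $(a,\infty)$ is the Abel equation $\frac{1}{\Gamma(1-s)}\int_a^x u'(t)(x-t)^{-s}\,dt=0$; convolving with $(x-t)^{s-1}$ (the semigroup property of fractional integrals, which is the computation in the uniqueness part of the paper's Theorem~\ref{thm:probc}) yields $u(x)-u(a)=0$. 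Your proposed ansatz $w=\sum_{m\ge 0}c_m(x-a)^{\gamma+ms}$ shows this concretely: $D_a^s$ sends the $m$-th term to a multiple of $(x-a)^{\gamma+(m-1)s}$, these exponents are pairwise distinct, and the lowest one, $\gamma-s$, is produced by the $m=0$ term alone with the nonzero factor $\Gamma(\gamma+1)/\Gamma(\gamma+1-s)$; hence $D_a^sw=0$ forces $c_0=0$ and then, inductively, every $c_m=0$. There is no telescoping --- the Mittag-Leffler-type series you are reaching for are eigenfunctions of $D_a^s$ with nonzero eigenvalue, not zero-modes. Consequently the ``basis $\{1\}\cup\{(x-a)^\beta:\beta\in E\}$'' does not exist, and steps (ii) (jet matching via a Vandermonde argument) and (iii) (controlling the $C^k$ error) have no building blocks to act on.

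The flexibility actually comes from a point your proposal mentions in passing but never exploits: stationarity is required only on $[0,\infty)$ while the initial point is $a<0$, so $u$ may be prescribed \emph{arbitrarily} on $[a,0]$ and is then determined on $(0,\infty)$ by solving $\int_0^x u'(t)(x-t)^{-s}\,dt=-\int_a^0\varphi'(t)(x-t)^{-s}\,dt$, for which the paper gives an explicit Poisson-like formula (Lemma~\ref{lem:int11} and Theorem~\ref{thm:probc}). This infinite-dimensional freedom in the prescribed data is what generates enough Caputo-stationary functions: the paper extracts from it a sequence converging to $\kappa x^s$ and uses the linear independence of the derivatives $(x^s)^{(i)}$ to show one can prescribe an arbitrary $(k+1)$-jet at a point (Theorem~\ref{thm4}). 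Your final scaling idea --- pushing the initial point far to the left and rescaling so the restriction to $[0,1]$ matches a prescribed jet with $\mathcal O(\delta)$ error --- is essentially the paper's concluding blow-up step and is sound, but it cannot start until the power-series ansatz is replaced by this boundary-data construction.
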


\bigskip
\textcolor{black}{In the next lines we recall some notions and make some preliminary remarks on the Caputo derivative. }

\textcolor{black}{The reader can see Chapter 7.5 in \cite{zygmund} for the definition of absolutely continuous functions. In particular, we use the following characterization, given in Theorem 7.29 in \cite{zygmund}, that we recall in the next Theorem.
\begin{thm}\label{acrep} A function $f$ is absolutely continuous in $ [a,b]$ if and only if $f'$ exists almost everywhere in $[a,b]$, $f'$ is integrable on $[a,b]$ and
\bgs{ f(x)-f(a)=\int_a^x f'(t)\, dt, \quad a\leq x\leq b.}
\end{thm}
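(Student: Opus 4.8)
The statement is the Lebesgue version of the fundamental theorem of calculus, so the plan is to prove the two implications separately. For the easier direction, I would assume that $f'$ exists a.e., is integrable, and satisfies $f(x)-f(a)=\int_a^x f'(t)\,dt$, and deduce that $f$ is absolutely continuous. This reduces to the absolute continuity of the Lebesgue integral: since $g:=f'\in L^1\big([a,b]\big)$, for every $\eee>0$ there is $\delta>0$ with $\int_E|g|<\eee$ whenever the measurable set $E$ has measure less than $\delta$. Then for finitely many disjoint intervals $(a_i,b_i)\subset[a,b]$ with $\sum_i(b_i-a_i)<\delta$, taking $E=\bigcup_i(a_i,b_i)$ gives $\sum_i|f(b_i)-f(a_i)|=\sum_i\big|\int_{a_i}^{b_i}g\big|\le\int_E|g|<\eee$, which is exactly the defining property of absolute continuity.

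For the harder direction I would argue in stages. First I would show that an absolutely continuous function has bounded variation on $[a,b]$: choosing $\delta$ associated with $\eee=1$ and splitting any partition into groups whose subinterval lengths sum to less than $\delta$ bounds the total variation by $\lceil(b-a)/\delta\rceil$. Writing the Jordan decomposition $f=f_1-f_2$ with $f_1,f_2$ nondecreasing, I would invoke Lebesgue's theorem on differentiation of monotone functions to conclude that $f'$ exists a.e.; applying Fatou's lemma to the difference quotients of $f_1$ and $f_2$ yields $\int_a^b f_j'\le f_j(b)-f_j(a)<\infty$, so $f'$ is integrable.

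The main obstacle, and the one place where absolute continuity is genuinely needed, is upgrading a.e.\ differentiability to the integral identity: bounded variation alone does not suffice, as the Cantor function has $f'=0$ a.e.\ yet is nonconstant, so one must rule out a singular part. To do this I would set $g(x):=\int_a^x f'(t)\,dt$. By the first part $g$ is absolutely continuous, and by the Lebesgue differentiation theorem $g'=f'$ a.e., so $h:=f-g$ is absolutely continuous with $h'=0$ a.e. The crux is then the lemma that an absolutely continuous function with a.e.-vanishing derivative is constant, which I would prove by a Vitali covering argument: fixing $\eee>0$ and the associated $\delta$, at a.e.\ point $x$ the condition $h'(x)=0$ furnishes arbitrarily short intervals $[x,x+t]$ with $|h(x+t)-h(x)|<\eee t$; finitely many disjoint such intervals cover $[a,b]$ up to a set of measure less than $\delta$, whose contribution to the increment of $h$ is controlled by absolute continuity, giving $|h(b)-h(a)|\le\eee(b-a)+\eee$. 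Letting $\eee\to0$, and repeating the argument on each subinterval $[a,x]$, forces $h$ to be constant. Since $h(a)=f(a)-g(a)=f(a)$, this yields $f(x)-f(a)=g(x)=\int_a^x f'(t)\,dt$, completing the proof.
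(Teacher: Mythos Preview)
Your outline is the standard proof of the Lebesgue fundamental theorem of calculus and is essentially correct; the only point to watch is that in the Vitali step you should apply the covering theorem to the full-measure set $\{x:h'(x)=0\}$ to extract the finite disjoint family, which you implicitly do. Note, however, that the paper does \emph{not} supply its own proof of this statement: Theorem~\ref{acrep} is merely quoted as Theorem~7.29 of \cite{zygmund} and used as a black box throughout, so there is no argument in the paper to compare your proposal against.
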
 }
  
\textcolor{black}{By convention, when we take the Caputo derivative $D_a^s$ of a function, we assume that the function is ``causal'', i.e. that it is constant on $(-\infty,a)$. In particular, we take $u(x)=u(a)$ for any $x<a$ and this, by definition \eqref{caputo}, implies that $D_a^s u(x) =0$ for $x<a$. }
 \textcolor{black}{Lastly, we recall the Beta function (see Chapter 6.2 in the book \cite{AS64} for other details) defined for $x,y >0$ as
	\eqlab {\beta(x,y) :=\int_0^1 t^{x-1} (1-t)^{y-1} \, dt \label{b01}.} 
	We also have that 
	\[ \beta(x,y)=\frac{\Gamma(x)\Gamma(y)}{\Gamma(x+y)}.\]  In particular, the next explicit result holds
	\eqlab { \label{b02} \beta(s,1-s)=\Gamma(s)\Gamma(1-s)=\frac{\pi}{\sin\pi s}.} }

\section{Strategy of the proof}\label{sbssc}

The proof is inspired from \cite{DSV14}, where a similar result is proved for the fractional Laplacian (see \cite{DNPV12} for details about this operator). Here, we have to take into account the structure of the Caputo derivative and study in detail its behavior. 

 The main idea of the proof is that one can build a Caputo-stationary function in say $I = [0,1]$ by choosing a ``good'' given function as ``boundary'' dat\textcolor{red}{um}. For the nonlocal operators, the ``boundary'' is the complement of the given interval, for example, the fractional Laplacian takes into account the entire space and the ``boundary'' is $\R \setminus I$.  On the other hand, the Caputo derivative considers only the left-side complement and this reflects in the lack of symmetry of the boundary conditions. Namely, the ``boundary'' in the equations with the Caputo derivative is $(-\infty, 0]$, with the added convention that events start at a given point, say $t_0<0$ and $f$ is constant before time $t_0$. 
 
In order the prove Theorem \ref{thm:thm1}, we use at first the Stone-Weierstrass Theorem, that we recall here. Let $k\in \N_{0}$ be a fixed arbitrary number.

\begin{thm}\label{thm:SW}
 For any $f \in C^k\big([0,1]\big)$ and any positive $\eee$ there exists a polynomial $P$ such that \[ \|f-P\|_{C^k\big([0,1]\big)} < \eee.\]
\end{thm}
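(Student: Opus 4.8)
The plan is to reduce this $C^k$-approximation statement to the classical Weierstrass theorem for continuous functions (the case $k=0$), applied to the top-order derivative $f^{(k)}$, and then to recover control on all the lower-order derivatives by repeated integration. First I would invoke the classical Weierstrass approximation theorem on $f^{(k)}\in C\big([0,1]\big)$: for a parameter $\delta>0$ to be fixed at the end, there is a polynomial $Q$ with
\[ \sup_{x\in[0,1]}\big|f^{(k)}(x)-Q(x)\big| < \delta. \]

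Next I would build the candidate polynomial $P$ by integrating $Q$ exactly $k$ times, fixing the $k$ constants of integration so that the lower-order data of $P$ matches that of $f$ at the left endpoint. Concretely, I set $P^{(k)}:=Q$ and define downward
\[ P^{(i)}(x) := f^{(i)}(0) + \int_0^x P^{(i+1)}(t)\,dt, \qquad i=k-1,\dots,0. \]
Each $P^{(i)}$ is an integral of a polynomial plus a constant, hence a polynomial, so $P:=P^{(0)}$ is a polynomial; by construction its $i$-th derivative is precisely $P^{(i)}$ and $P^{(i)}(0)=f^{(i)}(0)$ for every $i=0,\dots,k-1$.

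The heart of the estimate is a downward induction on $i$. At the top level the error is controlled directly, $\sup_{[0,1]}|f^{(k)}-P^{(k)}| = \sup_{[0,1]}|f^{(k)}-Q| < \delta$. For the inductive step, the matching of initial data $f^{(i)}(0)=P^{(i)}(0)$ together with the fundamental theorem of calculus gives
\[ f^{(i)}(x)-P^{(i)}(x)=\int_0^x\big(f^{(i+1)}(t)-P^{(i+1)}(t)\big)\,dt, \]
whence, for $x\in[0,1]$,
\[ \sup_{[0,1]}\big|f^{(i)}-P^{(i)}\big| \;\le\; \int_0^1 \big|f^{(i+1)}-P^{(i+1)}\big|\,dt \;\le\; \sup_{[0,1]}\big|f^{(i+1)}-P^{(i+1)}\big|, \]
the last inequality using that $[0,1]$ has unit length. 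Thus every one of the $k+1$ supremum norms is bounded by $\delta$, and summing them yields $\|f-P\|_{C^k\big([0,1]\big)} \le (k+1)\delta$. Choosing $\delta := \varepsilon/(k+2)$ (any value strictly below $\varepsilon/(k+1)$ works) then gives $\|f-P\|_{C^k\big([0,1]\big)}<\varepsilon$, as desired.

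The one point worth emphasizing — and the only place any care is needed — is that integration does not amplify the approximation error here, precisely because $|x|\le 1$ on $[0,1]$; on a longer interval each integration would contribute a factor equal to the interval length, and one would simply absorb the resulting constant into the choice of $\delta$. I do not expect a genuine obstacle: the substance of the argument is carried entirely by the classical Weierstrass theorem for $f^{(k)}$, and the passage to the full $C^k$ norm is routine bookkeeping once one integrates from the matched endpoint data.
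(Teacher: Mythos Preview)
Your argument is correct: approximating $f^{(k)}$ by a polynomial $Q$ via the classical Weierstrass theorem and then integrating $k$ times from $0$ with matched Taylor data $P^{(i)}(0)=f^{(i)}(0)$ is the standard and cleanest way to obtain the $C^k$ version, and your downward induction using the unit length of $[0,1]$ is sound.

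As for the comparison you asked about: the paper does not actually prove this statement. Theorem~\ref{thm:SW} is merely \emph{recalled} as the Stone--Weierstrass theorem and used as a black box in the reduction of Theorem~\ref{thm:thm1} to the monomial case; no argument is given in the text. So there is no ``paper's own proof'' to set your proposal against --- you have supplied a proof where the paper simply invokes a classical result.
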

 
Then, if we prove that for any polynomial $P$ there exists a Caputo-stationary function $u$ arbitrarily close to it, by using Theorem \ref{thm:SW} we would have that 
\[ \begin{split}\|u -f \|_{C^k \big([0,1]\big)}   \leq \al \|u-P\|_{C^k \big([0,1]\big)}   +\|f-P\|_{C^k \big([0,1]\big)}  <  2\eee.
	\end{split}\] 
This would conclude the proof of Theorem \ref{thm:thm1}. 

In order to have this, we claim that it suffices to prove that for any monomial 
	\[ q_m(x)= x^m \mbox{, } m\in \N\] 
and for any $\eee_m >0$ there exists a function $u_m$ that is Caputo-stationary in $[0,1]$, such that  
	\begin{equation}\label{mapp1} \|u_m-q_m\|_{C^k \big([0,1]\big)} < \eee_m.\end{equation}
Indeed, consider an arbitrary $n \in \N$ and the polynomial $\displaystyle P(x)= \sum_{m=0}^n c_m q_m(x)$. Then the function 
$\displaystyle u(x):=\sum_{m=0}^n c_m u_m(x)$ would satisfy 
	\[ \|u  - P \|_{C^k \big([0,1]\big)} \leq \sum_{m=0}^n |c_m|\, \|u_m- q_m\|_{C^k \big([0,1]\big)}  < \sum_{m=0}^n |c_m| \eee_m =\eee,\]
where one considers for any $m$ the small quantity  $\eee_m=\displaystyle \frac{\eee}{|c_m|(n+1)}$.
Also, the function $u$ is Caputo-stationary, since the Caputo derivative is linear. Hence, the function $u$ is Caputo-stationary and is ``close'' to any polynomial. This proves the claim.
\bigskip

In the rest of the paper, we prove that we can find a Caputo-stationary function close to any given monomial. To do this, we proceed as follows: \\
\begin{itemize}
\item \textcolor{black}{ In Section \ref{sectrfcp}, we obtain a representation formula for $u$, when $D_a^s u(x)= 0 $ in $(b,\infty)$ for a given $b>a$ and having prescribed $u$ on $(-\infty,b]$. To do this, we prove that having $D_a^s u(x)= 0 $ is equivalent to having a particular integro-differential equation. We then obtain a representation formula for the integro-differential equation, hence for our initial equation. } \\
\item  In  Section \ref{sectaps}, we prove that there exists a sequence $(v_j)_{j\in \N}$ of Caputo-stationary functions in $(0,\infty)$ such that, uniformly on bounded  subintervals of $(0,\infty)$, we have that $\lim_{j\to \infty} v_j(x) = \kappa x^s$, for a \textcolor{black}{suitable} constant $\kappa>0$.\\
\item In Section \ref{sectma} we prove that there exists a Caputo-stationary function with an arbitrarily large number of derivatives prescribed. We do this by taking advantage of the particular structure of the function $x^s$. If we take any derivative of such a function, say $(x^s)^{(i)}= s(s-1)\dots(s-i+1) x^{s-i} ,$ for \textcolor{black}{$x> 0$} this derivative never vanishes.\\
\item \textcolor{black}{Section \ref{sectthm1} deals with the proof of Theorem \ref{thm:thm1}. Prescribing the derivatives of $u$ such that, for $m\in \N$,  they vanish at $0$ until the order $m-1$, and are equal to $1$ at order $m$, using a Taylor expansion and performing a blow-up argument, we can conclude the proof of the main theorem.}
%
\end{itemize}

\section{A representation formula for a Caputo-stationary function}\label{sectrfcp}
The purpose of this section is to deduce a Poisson-like representation formula for a function $u\in C_a^{1,s}$ that is Caputo-stationary with initial point $a$ in the interval $(b,\infty)$ for $b>a$, and fixed outside, i.e. 
	\bgs{ &D_a^s u(x) =  0 &\text{ in } & (b,\infty),\\
	&\mbox{  prescribed data }  &\text{ in } & (-\infty,b]. }
To do this, we prove that this problem is equivalent to the integro-differential equation
	\bgs{&\int_b^x u'(t)(x-t)^{-s}\, dt = g(x) &\text{ in } & (b,\infty),\\
	&\mbox{  prescribed data }  &\text{ in } & (-\infty,b], }
for a given function $g$ (that depends on the prescribed data of the initial problem). Then, we 
introduce in Theorem \ref{thm:probc} a representation formula for this integro-differential equation. With these two results in hand, we obtain a representation for the solution of the initial problem. 
Moreover, we present here an interior regularity result.
	\begin{center}
\begin{figure}[htpb]
	\hspace{0.6cm}
	\begin{minipage}[b]{0.85\linewidth}
	\centering
	\includegraphics[width=0.90\textwidth]{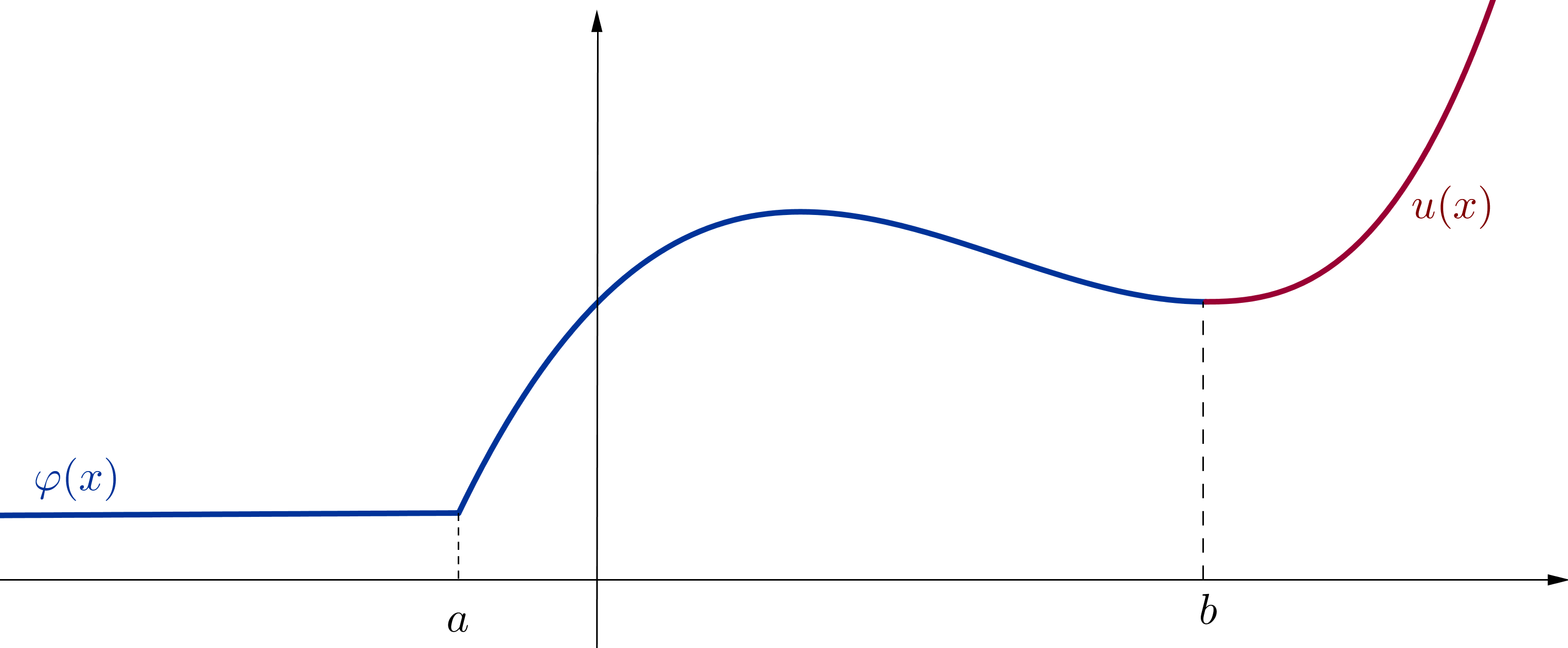}
	\caption{A Caputo-stationary function in $(b,\infty)$ prescribed on $(-\infty,b]$}   
	\label{fign:Lem31}
	\end{minipage}
\end{figure} 
\end{center} 

\textcolor{black}{In this section, we fix the arbitrary parameters $a,b \in \R$ with $b>a$ and $s\in(0,1)$.}  

We state in the next Lemma the equivalence between the two problems above.

\begin{lem}\label{lem:int11}
Let $\varphi \in C\big((-\infty,b] \big)\cap C^1\big([a,b]\big)$ such that $\varphi(x) = \varphi(a)$ in $(-\infty,a]$. Then $u\in  C_a^{1,s}$  satisfies the equation
	\bgs{ \label{intr1}
			D_a^s u(x)& =0 &\text{ in } & (b,\infty),\\
			u(x)&=\varphi(x) &\text{ in }  & (-\infty,b]}
if and only if it satisfies 
	\bgs{ \label{intr2}
		 \int_b^x u'(t)(x-t)^{-s} \, dt   &=- \int_{a}^b \varphi'(t)(x-t)^{-s}\, dt & \text{ in }& (b,\infty),\\
		u(x)&=\varphi(x)&\text{ in } & (-\infty,b]. }
\end{lem}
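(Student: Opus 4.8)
The plan is a direct computation: for $x>b$ we split the integral defining the Caputo derivative at the point $b$ and exploit the fact that $u$ coincides with $\varphi$ to the left of $b$. Fix $x\in(b,\infty)$. First note that both integrals occurring in the integro-differential problem are finite. Since $\varphi\in C^1\big([a,b]\big)$, the derivative $\varphi'$ is bounded on $[a,b]$, and for $t\in[a,b]$ one has $x-t\geq x-b>0$, so $(x-t)^{-s}$ is bounded there; hence $\varphi'(\cdot)(x-\cdot)^{-s}\in L^1\big((a,b)\big)$. For the other term, $u\in C_a^{1,s}$ gives by definition \eqref{ca1s} that $u'(\cdot)(x-\cdot)^{-s}\in L^1\big((a,x)\big)$, and since $(b,x)\subset(a,x)$ also $u'(\cdot)(x-\cdot)^{-s}\in L^1\big((b,x)\big)$.

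Next, assume $u=\varphi$ on $(-\infty,b]$. In particular $u=\varphi$ on $[a,b]$, and since $u\in AC\big([a,b]\big)$ while $\varphi\in C^1\big([a,b]\big)\subset AC\big([a,b]\big)$, Theorem \ref{acrep} gives $u(y)-u(a)=\int_a^y u'(t)\,dt$ and $\varphi(y)-\varphi(a)=\int_a^y \varphi'(t)\,dt$ for every $y\in[a,b]$; subtracting and using $u\equiv\varphi$ on $[a,b]$ we get $\int_a^y\big(u'(t)-\varphi'(t)\big)\,dt=0$ for all $y\in[a,b]$, hence $u'(t)=\varphi'(t)$ for a.e. $t\in(a,b)$. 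Therefore, by additivity of the integral over $[a,b]\cup[b,x]$,
\[ \Gamma(1-s)\,D_a^s u(x)=\int_a^x u'(t)(x-t)^{-s}\,dt=\int_a^b \varphi'(t)(x-t)^{-s}\,dt+\int_b^x u'(t)(x-t)^{-s}\,dt. \]
Since $\Gamma(1-s)\neq 0$, the identity $D_a^s u(x)=0$ holds for every $x\in(b,\infty)$ if and only if
\[ \int_b^x u'(t)(x-t)^{-s}\,dt=-\int_a^b \varphi'(t)(x-t)^{-s}\,dt\qquad\text{for every }x\in(b,\infty). \]
Coupling this with the constraint $u=\varphi$ on $(-\infty,b]$, which is the same in both problems, yields exactly the integro-differential system, and the computation is reversible, so both implications follow simultaneously.

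I do not expect a genuine obstacle here; the statement is essentially a bookkeeping identity. The only two points deserving care are: (i) the passage from $u=\varphi$ on $[a,b]$ to $u'=\varphi'$ a.e. on $(a,b)$, handled above via the absolute-continuity characterization of Theorem \ref{acrep}; and (ii) the observation that the splitting point $b$ is strictly interior to the region where the kernel $(x-t)^{-s}$ is non-singular, because $x>b$, so no improper-integral or convergence subtleties arise when breaking $\int_a^x$ into $\int_a^b+\int_b^x$. One may also remark that the behaviour of $u$ on $(-\infty,a)$ (where it is constant by the causality convention, consistently with $u=\varphi$ and $\varphi(x)=\varphi(a)$ there) plays no role, since $D_a^s u(x)$ with $x>b>a$ only sees $u$ on $[a,x]$.
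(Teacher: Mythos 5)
Your proposal is correct and follows essentially the same route as the paper's proof: split the Caputo integral at $b$, replace $u'$ by $\varphi'$ on $(a,b)$ using the prescribed data, and observe that the kernel $(x-t)^{-s}$ is non-singular on $[a,b]$ when $x>b$, so the equivalence is immediate. The only difference is that you explicitly justify $u'=\varphi'$ a.e.\ on $(a,b)$ via Theorem \ref{acrep}, a point the paper's proof takes for granted when substituting inside the integral.
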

The reader can see a qualitative graphic of a function described by Lemma \ref{lem:int11} in Figure \ref{fign:Lem31}. \textcolor{black}{An explicit example of such a function is build in the Appendix, in Figure \ref{fign:es1}.} 
\begin{proof} Since $\varphi \in C^1\big([a,b])$ we have for any $x\geq b$  
	\bgs{\bigg |\int_a^{b} \varphi'(t)(x-t)^{-s} \, dt \bigg |\leq  \sup_{t\in [a,b]} |\varphi'(t)| \frac{ (x-a)^{1-s}-(x-b)^{1-s}}{1-s}<\infty.}
Hence the map $x\mapsto\displaystyle  \int_a^{b} \varphi'(t)(x-t)^{-s} \, dt $ is well defined \textcolor{black}{in $[b,\infty)$}. Using the definition \eqref{caputo} for $x >b$ we have that
	\begin{equation*}
		\begin{split}
		\Gamma(1-s) D_a^s u(x)
		= \al  \int_b^x u'(t)(x-t)^{-s} \, dt + \int_a^{b}  u'(t)(x-t)^{-s} \, dt \\
		=\al  \int_b^x u'(t)(x-t)^{-s} \, dt + \int_a^{b} \varphi'(t)(x-t)^{-s} \, dt  .
		\end{split}
	\end{equation*}
It follows that $D_a^s u(x)=0$ on $(b,\infty)$ is equivalent to 
		\bgs{ 
		 \int_b^x u'(t)(x-t)^{-s} \, dt   =- \int_a^{b} \varphi'(t)(x-t)^{-s} \, dt  \quad \text{ in } (b,\infty).	}
 This concludes the proof of the Lemma. 
\end{proof}

In the following Theorem we introduce a representation formula for an integro-differential equation.
\begin{thm}
\label{thm:probc}
Let $g \in C_b^{1,1-s}$. The problem 
\eqlab{ \label{probc1}
			\int_b^x u'(t)(x-t)^{-s} \,dt  & = g(x) \quad \mbox{ in } (b,\infty),\\
			 u(b)  &=  0 } admits on $[b,\infty)$ a unique solution $u\in C_b^{1,s}$. Moreover, for any $x>b$,
			\begin{equation} \label{solc1} 
	u(x)= \textcolor{black}{\ig}\int_b^x g(t)(x-t)^{s-1} \, dt .
	\end{equation}
\end{thm}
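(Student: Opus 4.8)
The plan is to recognise \eqref{probc1} as a weakly singular Abel equation: up to the factor $\Gamma(1-s)$, its left-hand side is the Riemann--Liouville fractional integral of order $1-s$ of $u'$, so that applying (morally) the complementary fractional integral of order $s$ and using $u(b)=0$ produces the candidate \eqref{solc1}, whose prefactor $\ig$ is exactly $1/\big(\Gamma(s)\Gamma(1-s)\big)$ rewritten through \eqref{b02}. Rather than invoking a semigroup property, I would verify \eqref{solc1} by hand and then prove uniqueness, each step reducing to a single Beta integral: for $b\le r<x$ the change of variable $t=r+(x-r)\sigma$ gives
\[ \int_r^x (x-t)^{-s}(t-r)^{s-1}\,dt=\int_r^x (x-t)^{s-1}(t-r)^{-s}\,dt=\beta(s,1-s)=\frac{\pi}{\sin\pi s} \]
by \eqref{b02}.

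\emph{Existence and regularity.} Let $u$ be given by \eqref{solc1}. Since $g$ is bounded on compacts and $(x-\cdot)^{s-1}\in L^1\big((b,x)\big)$, $u$ is well defined and continuous on $[b,\infty)$, and the elementary bound $|u(x)|\le \ig\,\|g\|_{L^\infty((b,x))}\tfrac{(x-b)^s}{s}$ gives $u(b)=0$. Writing $g(\tau)=g(b)+\int_b^\tau g'(r)\,dr$ (legitimate by Theorem \ref{acrep}, as $g\in AC\big([b,x]\big)$), inserting this into \eqref{solc1} and exchanging the order of integration (Tonelli, using $g'\in L^1\big((b,x)\big)$) yields the closed form
\[ u(x)=\frac1s\,\ig\,\Big(g(b)(x-b)^{s}+\int_b^x g'(r)(x-r)^{s}\,dr\Big). \]
Set $\psi(t):=\ig\Big(g(b)(t-b)^{s-1}+\int_b^t g'(r)(t-r)^{s-1}\,dr\Big)$. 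Because $g\in C_b^{1,1-s}$ means precisely that $r\mapsto g'(r)(x-r)^{s-1}$ lies in $L^1\big((b,x)\big)$, a Tonelli estimate with the Beta bound gives $\psi\in L^1\big((b,x)\big)$ for every $x$; and the very same Fubini computation as above shows that $\int_b^x\psi(t)\,dt$ equals the closed form of $u(x)$, i.e. $\int_b^x\psi=u(x)-u(b)$. Hence $u\in AC\big([b,x]\big)$ with $u'=\psi$ a.e. Finally, splitting $\int_b^x(x-t)^{-s}|\psi(t)|\,dt$ into its two pieces and applying Fubini and the Beta identity bounds it by $|g(b)|+\int_b^x|g'(r)|\,dr<\infty$, so $u'(\cdot)(x-\cdot)^{-s}\in L^1\big((b,x)\big)$ and $u\in C_b^{1,s}$.

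\emph{Verification of the equation.} Substituting $u'=\psi$ into $\int_b^x u'(t)(x-t)^{-s}\,dt$, the term generated by $g(b)$ is $\ig\,g(b)\int_b^x(x-t)^{-s}(t-b)^{s-1}\,dt=\ig\,g(b)\,\beta(s,1-s)=g(b)$ by the Beta identity and \eqref{b02}; in the remaining double integral, exchanging the order of integration (licit by the absolute integrability just established) turns the inner integral into $\int_r^x(x-t)^{-s}(t-r)^{s-1}\,dt=\beta(s,1-s)$, which leaves $\ig\,\beta(s,1-s)\int_b^x g'(r)\,dr=g(x)-g(b)$. Adding the two contributions gives exactly $g(x)$, which is \eqref{probc1}.

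\emph{Uniqueness and main obstacle.} If $u_1,u_2\in C_b^{1,s}$ both solve \eqref{probc1}, then $w:=u_1-u_2$ satisfies $w(b)=0$ and $\int_b^x w'(t)(x-t)^{-s}\,dt=0$ for all $x>b$. Fixing $y>b$, multiplying by $(y-x)^{s-1}$, integrating over $x\in(b,y)$ and exchanging the order of integration (licit since $w'\in L^1\big((b,y)\big)$ and the kernel is Beta-integrable) turns the inner integral into $\int_t^y(y-x)^{s-1}(x-t)^{-s}\,dx=\beta(s,1-s)$, giving $0=\beta(s,1-s)\int_b^y w'(t)\,dt=\beta(s,1-s)\,w(y)$, so $w\equiv 0$ and $u_1=u_2$. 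I expect the only real work to be the bookkeeping around the weakly singular kernels — producing the closed form of $u$ and of its a.e.\ derivative, and discharging the Fubini/Tonelli interchanges from the memberships $g\in C_b^{1,1-s}$ and $u\in C_b^{1,s}$ — with the single identity $\int_r^x(x-t)^{-s}(t-r)^{s-1}\,dt=\pi/\sin\pi s$ of \eqref{b02} doing all the actual computing.
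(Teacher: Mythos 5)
Your proposal is correct and follows essentially the same route as the paper: the same decomposition $g(\tau)=g(b)+\int_b^\tau g'$, the same Tonelli/Fubini interchanges to identify $u$ as the integral of the a.e.\ derivative $\psi$, the same Beta identity $\int_r^x(x-t)^{-s}(t-r)^{s-1}\,dt=\pi/\sin\pi s$ to verify the equation and to run the uniqueness argument by multiplying by $(y-x)^{s-1}$ and integrating. The only cosmetic difference is that you first write down the closed form of $u$ and then check $\int_b^x\psi=u(x)-u(b)$, whereas the paper expresses $u(x)$ directly as $\int_b^x(\cdots)\,dy$; the content is identical.
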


\begin{proof}
\textcolor{black}{We prove this theorem by showing that $u$ given in \eqref{solc1} is  well defined, belongs to the space $C_b^{1,s}$ and is the unique solution of the problem \eqref{probc1}.}
 
Since $g$ belongs to $C_b^{1,1-s}$ (recall \eqref{ca1s}), for any $x> b$ we have that
	\bgs{|u(x)| \leq\ig  \int_b^x |g(t)| (x-t)^{s-1} \, dt  \leq c_s\sup_{ t \in [b,x]}	|g(t)|  (x-b)^{s} <\infty,} where $c_s$ is a positive constant. Hence the definition \eqref{solc1} is well posed.  

\bigskip 
We prove that $u$ belongs to $ C_b^{1,s}$.
We claim that 
\textcolor{black}{ \eqlab{ \label{cbsu1}  g\in C_b^{1,1-s} & \mbox{ and } u \mbox{ as in } \eqref{solc1} \implies  \\
 & u \in AC\big([b,\infty)\big) \mbox{ and} \\
  & u'(y)=\frac{\sin \pi s} {\pi}  \lr{ \int_b^y g'(\tau)(y-\tau)^{s-1}\, d\tau + g(b)(y-b)^{s-1}} \quad  \mbox{   a.e. in  } [b,\infty). }}
We fix an arbitrary $x>b$. According to definition \eqref{ca1s},  $g \in AC\big( [b,x]\big) $ and \textcolor{black}{thanks to Theorem \ref{acrep} we have that}
for any $t\in [b,x]$
	\[ g(t)=\int_b^t g'(\tau)\, d\tau + g(b).\]
And so in \eqref{solc1} we have that
	\eqlab{ \label{bla2}   \frac{\pi}{\sin \pi s} \, u(x)
		= \al \int_b^x \lr{ \int_b^t g'(\tau)\, d\tau } (x-t)^{s-1}\, dt + g(b) \int_b^x (x-t)^{s-1}\,dt .}
We compute
	\eqlab {\label{bla1}  \int_b^x (x-t)^{s-1} \, dt = \frac{(x-b)^s}{s} = \int_b^x (y-b)^{s-1} \, dy.}
Tonelli theorem applied to the positive measurable function $|g'(\tau)|(x-t)^{s-1}$ on the domain 
\eqlab{ \label{rev2}D_{b,x}:=\{ (t,\tau) \text{ s.t. } b\leq t\leq x, b\leq \tau\leq t\}} 
with the product measure $d(t,\tau)$ gives 
	\eqlab{\label{rev1} \iint_{D_{b,x}} |g'(\tau)|\,   (x-t)^{s-1} \, d (t, \tau) =   \al \int_b^x |g'(\tau)| \lr{  \int_\tau^x (x-t)^{s-1} \, dt }\,d\tau \\ 
	 	=\al\frac{1}{s}\int_b^x |g'(\tau)| (x-\tau)^s \, d\tau \\ 
			\leq\al\frac{(x-b)^s}{s}  \|g'\|_{L^1\big((b,x)\big)}, }
which is a finite quantity. Hence $|g'(\tau)| (x-\tau)^{s-1} \in L^1\lr{D_{b,x}, d(t,\tau)}$ and by Fubini theorem \textcolor{black}{ and using \eqref{bla1}} it follows that
	\bgs{ \int_b^x \lr{\int_b^t g'(\tau)\, d\tau } (x-t)^{s-1}\, dt  = \al \int_b^x g'(\tau) \lr{ \int_{\tau}^x  (x-t)^{s-1} \, dt} \, d\tau\\
		= \al \int_b^x  g'(\tau) \lr{\int_\tau^x (y-\tau)^{s-1} \, dy} \, d\tau\\
			=\al \int_b^x \lr{ \int_b^y g'(\tau) (y-\tau)^{s-1} \, d\tau }\, dy.}
Inserting this and identity \eqref{bla1} into \eqref{bla2}, we obtain that
	\[  \frac{\pi}{\sin \pi s}\,  u(x) = \int_b^x \lr{  \int_b^y g'(\tau) (y-\tau)^{s-1} \, d\tau + g(b) (y-b)^{s-1} } \, dy.\] 
\textcolor{black}{Hence $u$ is the integral function of a $L^1\big((b,x)\big)$ function (thanks to \eqref{rev1}) and recalling that $u(b)=0$, according to Theorem \ref{acrep} we have that $u\in AC\big([b,x]\big)$.}
Moreover, almost everywhere in $[b,x]$ 
	 \bgs {\label{u1d}
	\frac{\pi} {\sin \pi s} \,u'(y)=\int_b^y g'(\tau)(y-\tau)^{s-1}\, d\tau + g(b)(y-b)^{s-1}. }
\textcolor{black}{With this, given the arbitrary choice of $x$, we have proved the claim \eqref{cbsu1}.}

We claim now that $ u'(\cdot) (x-\cdot)^{-s} \in L^1\big( (b,x))$. Using the second identity in \eqref{cbsu1}, we obtain that
	\eqlab{ \label{fbca} &\frac{\pi}{\sin \pi s}  \int_b^x  | u'(y)| (x-y)^{-s} \, dy \\
	\leq 
	 \al \int_b^x  \lr{\int_b^y |g'(\tau)| (y-\tau)^{s-1}  \, d\tau} (x-y)^{-s} \, dy + | g(b) |  \int_b^x (y-b)^{s-1} (x-y)^{-s}dy . }
Tonelli theorem applied to the positive function $|g'(\tau)| (y-\tau)^{s-1} (x-y)^{-s} $ on the domain $D_{b,x}$ \textcolor{black}{given in \eqref{rev2}} with the product measure $d(y,\tau)$ gives
	\bgs { \label{tt31} \iint_{D_{b,x}}  |g'(\tau)| (y-\tau)^{s-1} (x-y)^{-s} \, d(y, \tau) = \al \int_b^x  |g'(\tau)| \lr{ \int_\tau^x (y-\tau)^{s-1} (x-y)^{-s} \, dy } \, d\tau.} 
By using the change of variables $\displaystyle t = \frac{y-\tau}{x-\tau}$, thanks to definition \eqref{b01} \textcolor{black}{and identity \eqref{b02}} we have that
 \eqlab{ \label{bfcomp} 
\int_\tau^x (y-\tau)^{s-1} (x-y)^{-s} \, dy  = \int_0^1 t^{s-1}(1-t)^{-s} \, dt = \frac{\pi}{\sin\pi s}.}
Hence we obtain that
	\eqlab{  \label{Fubsto}  \iint_{D_{b,x}}  |g'(\tau)| (y-\tau)^{s-1} (x-y)^{-s} \, d(y ,\tau) 
 = \al  \frac{\pi}{\sin\pi s} \| g'\|_{L^1\big((b,x)\big)}.} 
\textcolor{black}{From this and using again \eqref{bfcomp} with $b=\tau$,} we obtain in \eqref{fbca} that 
	\bgs{ \al  \int_b^x  | u'(y)| (x-y)^{-s} \, dy \leq   \|g'\|_{ L^1 \big((b,x)\big)} + |g(b)|.}
Hence  $ u'(\cdot) (x-\cdot)^{-s} \in L^1\big( (b,x))$, as claimed. \textcolor{black}{From this and \eqref{cbsu1}, recalling definition \eqref{ca1s} it follows that} $u$ 
belongs to the space $C_b^{1,s}$.

\bigskip 
We prove now that $u$ is a solution of the problem \eqref{probc1}. Using the second identity in \eqref{cbsu1} we have that 
	\eqlab{ \label{blaq1}  \frac{\pi}{\sin \pi s} \int_b^x u' (y)(x-y)^{-s}\,  dy  
						=\al \int_b^x \lr { \int_b^y g'(\tau)(y-\tau)^{s-1} \, d\tau } (x-y)^{-s} \, dy  \\ \al +  g(b) \int_b^x (y-b)^{s-1} (x-y)^{-s}\, dy  .}
Thanks to \eqref{Fubsto}, we have that $|g'(\tau)| (y-\tau)^{s-1} (x-y)^{-s} \in L^1 (D_{b,x}, d(y,\tau)) $. We apply Fubini theorem and using \eqref{bfcomp} we get that  
	\bgs{ \int_b^x \lr{\int_b^y g'(\tau)(y-\tau)^{s-1} (x-y)^{-s}\, d\tau }\, dy =\al  \int_b^x g'(\tau) \lr{ \int_\tau^x (y-\tau)^{s-1} (x-y)^{-s} \, dy }  \, d\tau, \\
		=\al \frac{\pi}{\sin \pi s}  \lr{ g(x)-g(b) } .} 
Thanks again to \eqref{bfcomp}, in \eqref{blaq1} it follows that 	
\bgs{ \int_b^x \al u'(y)(x-y)^{-s}\, dy  = 	g(x),}
therefore $u$ is a solution of the problem \eqref{probc1}.
\bigskip

The solution is unique. We prove this by taking two different solutions $u_1,u_2\in C_b^{1,s}$ of the problem \eqref{probc1}. Let $u:=u_1-u_2$, then $u$ satisfies
	\bgs{ \int_b^x u'(t)(x-t)^{-s}\, dt &=0 &\text{in} \quad &(b,\infty),	\\
		u(b)&=0 .&&}
We take any $y>x$, we multiply both terms by the positive quantity $(y-x)^{s-1}$, integrate from $b$ to $y$ and obtain that
	\eqlab {\label{bla3}  \int_b^y \lr{ \int_b^x u'(t) (x-t)^{-s}\, dt } (y-x)^{s-1} \, dx =0.}
Since $u \in  C_b^{1,s}$, \textcolor{black}{we use Tonelli theorem on $D_{b,y}$ (we recall definition \eqref{rev2}) and by \eqref{bfcomp} we obtain that}
 	\bgs{  \iint_{D_{b,y}}  |u'(t)|  (x-t)^{-s}(y-x)^{s-1} \, d(x,t) 
	=\al  \int_b^y |u'(t)| \lr{ \int_t^y  (x-t)^{-s}(y-x)^{s-1} \, dx} \, dt\\
	=\al  \frac{\pi}{\sin \pi s} \|u'\|_{L^1\big((b,y)\big)}, } which is a finite quantity. Fubini theorem then allows us to compute 
\bgs{  \int_b^y \lr{ \int_b^x u'(t) (x-t)^{-s}\, dt } (y-x)^{s-1} \, dx =&\;  \int_b^y  u'(t) \lr{  \int_t^y (x-t)^{-s}(y-x)^{s-1} \, dx }\, dt \\
		= \al  \frac{\pi}{\sin \pi s}u(y)  	  .} 
It follows from \eqref{bla3} and from the initial condition $u(b)=0$ that $u_1(x)= u_2(x)$ on $[b,\infty)$. Therefore $u$ given in \eqref{solc1} is the unique solution of the problem \eqref{probc1} and this concludes the proof of the Theorem.
\end{proof}

We introduce an interior regularity result.
\begin{lem} \label{intreg}
Let $g \in C^{\infty}\big([b,\infty)\big)$ and $u$ be defined as in \eqref{solc1}. Then $u\in C^{\infty}\big((b,\infty)\big)$. 
\end{lem}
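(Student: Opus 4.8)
The plan is to show that the formula \eqref{solc1} defines a smooth function on $(b,\infty)$ by first rewriting it so that the singularity of the kernel $(x-t)^{s-1}$ at $t=x$ is transferred onto a smooth factor, and then differentiating under the integral sign as many times as we please. Concretely, for a fixed $x>b$ I would perform the change of variables $t=b+(x-b)\sigma$ with $\sigma\in(0,1)$, obtaining
\[ u(x)=\ig (x-b)^{s}\int_0^1 g\big(b+(x-b)\sigma\big)(1-\sigma)^{s-1}\, d\sigma. \]
Now the variable $x$ appears only through the prefactor $(x-b)^s$, which is $C^\infty$ on $(b,\infty)$, and through the argument of $g$ inside the integral; the kernel $(1-\sigma)^{s-1}$ no longer depends on $x$ and is an integrable weight on $(0,1)$ since $s>0$.

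The key step is then a standard differentiation-under-the-integral-sign argument applied to $I(x):=\int_0^1 g\big(b+(x-b)\sigma\big)(1-\sigma)^{s-1}\, d\sigma$. Since $g\in C^\infty\big([b,\infty)\big)$, for $x$ in any compact subinterval $[b+\delta, R]\subset(b,\infty)$ the point $b+(x-b)\sigma$ ranges over the compact set $[b, R]$, on which $g$ and all its derivatives are bounded; hence $\partial_x^j\big(g(b+(x-b)\sigma)\big)=\sigma^{j}\,g^{(j)}\big(b+(x-b)\sigma\big)$ is dominated, uniformly in $x\in[b+\delta,R]$, by $\sup_{[b,R]}|g^{(j)}|$, and multiplying by the fixed $L^1(0,1)$ weight $(1-\sigma)^{s-1}$ gives an integrable dominating function independent of $x$. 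By the dominated-convergence criterion for differentiating parameter integrals, $I\in C^\infty\big((b,\infty)\big)$ with $I^{(j)}(x)=\int_0^1 \sigma^{j}g^{(j)}\big(b+(x-b)\sigma\big)(1-\sigma)^{s-1}\, d\sigma$. Multiplying by the smooth prefactor $\ig(x-b)^s$ and applying the Leibniz rule, we conclude $u\in C^\infty\big((b,\infty)\big)$.

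The only mild obstacle is bookkeeping near the endpoint $t=x$: one must make sure the reparametrisation genuinely removes the $x$-dependence from the singular kernel, which it does precisely because the singularity sits at a fixed point $\sigma=1$ after rescaling. There is no difficulty at the other endpoint, and no issue at $x\to b^+$ is needed since we only claim smoothness on the open interval $(b,\infty)$. I expect the argument to be short; essentially all the work is the change of variables and invoking the classical theorem on differentiation under the integral sign.
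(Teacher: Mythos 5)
Your argument is correct, and it is genuinely different from the one in the paper. You rescale the integral to the fixed domain $(0,1)$ via $t=b+(x-b)\sigma$, which factors $u(x)$ as the smooth prefactor $\frac{\sin\pi s}{\pi}(x-b)^s$ times a parameter integral with the fixed integrable weight $(1-\sigma)^{s-1}$, and then you differentiate under the integral sign; the domination $\big|\partial_x^j g(b+(x-b)\sigma)\big|=\sigma^j\big|g^{(j)}(b+(x-b)\sigma)\big|\le\sup_{[b,R]}|g^{(j)}|$ on compact subintervals $[b+\delta,R]$ is exactly what the classical theorem requires, so the step is sound. The paper instead proceeds by induction on the order of the derivative, reusing the absolute-continuity claim \eqref{cbsu1} established inside the proof of Theorem \ref{thm:probc}: differentiating $\int_b^y g^{(n)}(\tau)(y-\tau)^{s-1}\,d\tau$ produces $\int_b^y g^{(n+1)}(\tau)(y-\tau)^{s-1}\,d\tau+g^{(n)}(b)(y-b)^{s-1}$, and iterating yields the explicit formula \eqref{undiff1} for $u^{(n)}$ with the singular boundary terms $\tilde c_{s,i}\,g^{(i)}(b)(y-b)^{s-n+i}$. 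Your route is shorter and avoids both the induction and the bookkeeping of those boundary terms (whose blow-up as $y\to b^+$ is irrelevant on the open interval); the paper's route buys an explicit closed-form expression for every derivative of $u$ in the original variables, making the boundary behaviour near $b$ visible, and it recycles machinery already in place. Either proof establishes the Lemma as stated.
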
 
\begin{proof}
We prove by induction that the next statement, which we call $P(n)$, holds for any $n\in \N$: 
\textcolor{black}{  \[ u\in C^n\big((b,\infty)) \]} 
and
 \eqlab{ \label{undiff1} u^{(n)}(y) = \ig\lr{  \int_b^y g^{(n)}(\tau) (y-\tau)^{s-1} \, d\tau +  \sum_{i=0}^{n-1} \tilde c_{s,i} g^{(i)}(b) (y-b)^{s-n+i}  } \\\mbox{for any } y\in (b,\infty) ,}
where 
	\begin{equation} \label{ctcsi1} \tilde c_{s,i} = \begin{cases}    (s-1)\dots (s-n+i+2) (s-n+i+1) \quad &\text{ for }  i\neq n-1\\
										  1  \quad \quad\quad\quad &\text{ for }  i= n-1. \end{cases}
	\end{equation}
		
We denote by
	\[ v(y):=\int_b^y g'(\tau)(y-\tau)^{s-1}\, d\tau. \] 
\textcolor{black}{and from \eqref{cbsu1} we have that almost anywhere in $[b,\infty)$
	\eqlab{\label{uprim} u'(y) = \ig \lr{ v(y) +   g(b)(y-b)^{s-1}}.} Since $g\in C^{\infty}\big([b,\infty)\big)$, we have in particular that $g'\in C_b^{1,1-s}$ hence from the definition of $v$ and \eqref{cbsu1} we get that $v\in AC\big([b,\infty)\big)$. It follows that $u'\in C\big((b,\infty)\big)$, since it is a sum of continuous functions. Therefore $u\in C^1\big((b,\infty)\big)$ and \eqref{uprim} holds pointwise in $(b,\infty)$}.
And so $P(1)$ is true. 

In order to prove the inductive step, we suppose that $P(n)$ holds and prove $P(n+1)$.
Let now
	\[ v(y):=\int_b^y  g^{(n)}(\tau)(y-\tau)^{s-1}\, d\tau.\] From \eqref{undiff1} we have that for any $y\in (b,\infty)$
	\eqlab{\label{unv1} u^{(n)}(y) =\ig\lr{ v(y)+  \sum_{i=0}^{n-1} \tilde c_{s,i} g^{(i)}(b) (y-b)^{s-n+i} }  .} 
\textcolor{black}{Since $g\in C^{\infty}\big([b,\infty)\big)$, in particular we have that $g^{(n)}\in C_b^{1,1-s}$ hence from the definition of $v$ and thanks to \eqref{cbsu1} we get that $v\in AC\big([b,\infty)\big)$ and 
almost everywhere on $[b,\infty) $
	\[ v'(y) = \int_b^y g^{(n+1)}(\tau) (y-\tau)^{s-1}\, d\tau + g^{(n)}(b) (y-b)^{s-1}.\] Now, also $g^{(n+1)}\in C_b^{1,1-s}$ and so, thanks to \eqref{cbsu1}, the map 
	\eqlab{\label{yg1} y \mapsto \displaystyle \int_b^y g^{(n+1)}(\tau) (y-\tau)^{s-1}\, d\tau \quad \in AC\big([b,\infty)\big) .} It yields that $v\in C^1\lr{(b,\infty)}$ and so from \eqref{unv1} we get that $u^{(n+1)}\in C\lr{(b,\infty)}$. Taking the derivative of \eqref{unv1} we have that pointwise in $(b,\infty)$}
\bgs{ \frac{\pi}{\sin\pi s}  u^{(n+1)} (y)=  \al \int_b^y g^{(n+1)}(\tau) (y-\tau)^{s-1}\, d\tau  +  	
%
	g^{(n)}(b)(y-b)^{s-1}+ 
	\sum_{i=0}^{n-1} \tilde c_{s,i} g^{(i)}(b) (s-n+i) (y-b)^{s-n+i-1} \\
		=\al \int_b^y g^{(n+1)}(\tau)(y-\tau)^{s-1}\, d\tau  +  \sum_{i=0}^{n} \tilde c_{s,i} g^{(i)}(b) (y-b)^{s-n+i},}
		 where we have used \eqref{ctcsi1} in the last line. 
Therefore the statement $P(n+1)$ is true and the proof by induction is concluded. 

It finally yields that $u\in C^{\infty}\lr{(b,\infty)}$ and this concludes the proof of the Lemma.
\end{proof}
\smallskip 

\section{Existence of a sequence of Caputo-stationary functions that tends \\to the function $x^s$}\label{sectaps}

In this Section we introduce some preliminary results, on which we will base the proof of Theorem \ref{thm:thm1}. The purpose of this section is to build a sequence of functions that are Caputo-stationary  in $(0,\infty)$ and that tends uniformly on bounded subintervals of $(0,\infty)$ to the function $x^s$. We do this by building a Caputo-stationary function in $(1,\infty)$, that at the point $1+\eee$ is asymptotic to $\eee^s$ and then we use a blow-up argument.

\bigskip

We fix the arbitrary parameter $s\in (0,1)$. We introduce the first Lemma of this Section.

\begin{lem}\label{lem1}
Let $\psi_0 \in C^1\big( [0,1]\big) \cap C\big((-\infty,1]\big)$ be such that 
	\eqlab {	\label{fifi41} &\psi_0(x)=\psi_0(0) & \text{ for any } &x\in (-\infty, 0],\\
			&\psi_0(x) =0 &\text{ for any } &x\in \bigg[\frac{3}{4}, 1\bigg],\\
	&\psi'_0 (x)<0 &\text{ for any } &x\in \bigg[0,\frac{3}{4}\bigg). }
Let $\psi\in C_0^{1,s}$ be the solution of the problem
	\eqlab {\label{prob1}
			D_0^s\psi(x) &=0 &\text{ in }& (1,\infty),\\
			\psi(x)&=\psi_0(x)&\text{ in } &(-\infty,1].} \textcolor{black}{Then $\psi \in C^{\infty}\big((1,\infty)\big)$ and} if $x=1+\eee$, we have that 
	\eqlab{\label{psieee} \psi(1+\eee) =\kappa \eee^s + \mathcal O(\eee^{s+1})  }
as $\eee \to 0$, for some $\kappa>0$. 
\end{lem}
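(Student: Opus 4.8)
The plan is to apply Theorem~\ref{thm:probc} with $b=1$ and $a=0$, so that the equation $D_0^s\psi(x)=0$ in $(1,\infty)$ with $\psi=\psi_0$ on $(-\infty,1]$ is, by Lemma~\ref{lem:int11}, equivalent to the integro-differential equation $\int_1^x \psi'(t)(x-t)^{-s}\,dt = g(x)$ on $(1,\infty)$ with $\psi(1)=\psi_0(1)=0$, where
\[ g(x) = -\int_0^1 \psi_0'(t)(x-t)^{-s}\,dt. \]
First I would check that $g \in C_1^{1,1-s}$, which is where the hypotheses on $\psi_0$ (that $\psi_0 \in C^1([0,1])$, and that it is constant on $(-\infty,0]$ so there is no contribution from $(-\infty,0)$) get used; differentiating under the integral sign gives $g'(x) = s\int_0^1 \psi_0'(t)(x-t)^{-s-1}\,dt$, which is continuous on $(1,\infty)$ and behaves like a constant times $(x-1)^{-s-1}$ times... — actually one must be a bit careful: near $x=1$, $g'(x)$ is bounded since $t$ ranges only over $[0,1]$ and stays away from $x>1$, so in fact $g\in C^\infty([1,\infty))$. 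Then Theorem~\ref{thm:probc} gives the representation
\[ \psi(x) = \frac{\sin\pi s}{\pi}\int_1^x g(t)(x-t)^{s-1}\,dt, \]
and since $g\in C^\infty([1,\infty))$, Lemma~\ref{intreg} immediately yields $\psi\in C^\infty((1,\infty))$, settling the first assertion.

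Next I would extract the asymptotics as $\eee\to 0$. Writing $x=1+\eee$ and substituting $t = 1+\eee\sigma$ for $\sigma\in[0,1]$ in the representation formula gives
\[ \psi(1+\eee) = \frac{\sin\pi s}{\pi}\,\eee^{s}\int_0^1 g(1+\eee\sigma)(1-\sigma)^{s-1}\,d\sigma. \]
Since $g$ is smooth near $1$, Taylor expansion gives $g(1+\eee\sigma) = g(1) + g'(1)\eee\sigma + \mathcal O(\eee^2)$ uniformly for $\sigma\in[0,1]$, and integrating term by term against the integrable weight $(1-\sigma)^{s-1}$ produces
\[ \psi(1+\eee) = \frac{\sin\pi s}{\pi}\Big( g(1)\,\beta(1,s) + \mathcal O(\eee)\Big)\eee^{s} = \kappa\,\eee^{s} + \mathcal O(\eee^{s+1}), \]
with $\kappa = \frac{\sin\pi s}{\pi}\cdot\frac{g(1)}{s}$. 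So the key remaining point is to show $\kappa > 0$, i.e. that $g(1) > 0$. But $g(1) = -\int_0^1 \psi_0'(t)(1-t)^{-s}\,dt$, and by hypothesis $\psi_0'(t) < 0$ on $[0,3/4)$ while $\psi_0'(t)=0$ on $[3/4,1]$ (so the singular weight $(1-t)^{-s}$ is only integrated against the zero part of $\psi_0'$ near $t=1$, and the integral converges), hence the integrand $-\psi_0'(t)(1-t)^{-s}$ is nonnegative and strictly positive on a set of positive measure; therefore $g(1)>0$ and $\kappa>0$.

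I expect the main obstacle — or at least the point requiring the most care — to be the rigorous justification of the Taylor expansion inside the integral and of differentiating $g$ under the integral sign, in particular confirming that $g$ (and its derivatives) are genuinely well-behaved at the endpoint $x=1$. The subtlety is that the formula for $g(x)$ has the weight $(x-t)^{-s}$ with $t\in[0,1]$ and $x=1+\eee$: as $\eee\to 0$ the point $x$ approaches the endpoint $t=1$ of the domain of integration, so one needs to verify uniform integrability of $(1-t)^{-s}$ (which holds since $s<1$) and that no blow-up occurs; the design choice $\psi_0\equiv 0$ near $t=1$ makes $g$ and all its derivatives bounded near $x=1$, which is exactly what makes the $\mathcal O(\eee^{s+1})$ error term legitimate. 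Once that regularity is in hand, the rest is the routine change of variables and Beta-function computation using \eqref{b01}.
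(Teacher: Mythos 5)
Your proposal is correct and follows essentially the same route as the paper: reduce via Lemma~\ref{lem:int11} to the integro-differential problem, show $g\in C^{\infty}\big([1,\infty)\big)$ (using crucially that $\psi_0'\equiv 0$ on $[3/4,1]$ so the kernel stays bounded as $x\to 1^+$), invoke Theorem~\ref{thm:probc} and Lemma~\ref{intreg}, then rescale $t=1+\eee\sigma$ and identify the leading coefficient $\beta(1,s)g(1)>0$ from the sign of $\psi_0'$. The only cosmetic difference is that you Taylor-expand $g$ itself at $1$, while the paper expands the kernel $(\eee z+1-t)^{-s}$ inside the double integral; both are legitimate once the smoothness of $g$ up to the endpoint is established.
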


\textcolor{black}{An explicit example of a function described in Lemma \ref{lem1} is depicted in Figure \ref{fign:es2} in the Appendix.}
\begin{proof}[Proof of Lemma \ref{lem1}]

Thanks to Lemma \ref{lem:int11} we have that $\psi \in C_0^{1,s}$ is solution of the problem \eqref{prob1} if and only if 
	\bgs{ \int_1^x \psi'(t)(x-t)^{-s}\, dt &= -\int_0^{3/4} \psi_0'(t) (x-t)^{-s} \, dt &\mbox{in } &(1,\infty), \\
										\psi(x) &=\psi_0(x) &\mbox{in } &(-\infty,1].}	
On $[1,\infty)$ we define the function
\eqlab{\label{gbla1} g(x):=- \int_0^{3/4} \psi_0'(t) (x-t)^{-s} \, dt, }
\textcolor{black}{hence our problem is now
	\eqlab{ \label{psil4} \int_1^x \psi'(t)(x-t)^{-s}\, dt &=g(x) &\mbox{in } &(1,\infty), \\
										\psi(x) &=\psi_0(x) &\mbox{in } &(-\infty,1].}	
We claim that $g\in C^{\infty}\big([1,\infty)\big)$.}
For that, let $F\colon [1,\infty)\times [0,3/4]\to \R$ be defined as $F(x,t):=\psi_0'(t)(x-t)^{-s}$. 
\textcolor{black}{Now, for any $h>0$ arbitrarily small we have that
\[ \bigg| \frac{F(x+h,t)-F(x,t)}h \bigg| \leq \sup_{t\in [0,3/4]} |\psi_0'(t)| \bigg|\frac{(x+h-t)^{-s}-(x-t)^{-s}}h\bigg|. \]
Since the map $[1,\infty)\ni x \mapsto (x-t)^{-s}$ is differentiable for any $t\in [0,3/4]$, by the mean value theorem we have that for $\theta \in (0,h)$
\[ \bigg|\frac{(x+h-t)^{-s}-(x-t)^{-s}}h\bigg| \leq s (x+\theta-t)^{-s-1} \leq s(x-t)^{-s-1}.\]
Then
\[\bigg|\frac{F(x+h,t)-F(x,t)}h\bigg|\leq s \sup_{t\in [0,3/4]} |\psi_0'(t)| (x-t)^{-s-1} \in L^1\big([0,3/4],dt\big),\]
hence by the dominated convergence theorem, we can pass the limit inside the integral and obtain that
\[ g'(x)= -\int_0^{3/4} \partial_x F (x,t)\, dt =  s\int_0^{3/4} \psi'_0(t)  (x-t)^{-s-1}\, dt.\]}
We can now take for any $n \in \N$ the function $F_n\colon [1,\infty)\times [0,3/4]\to \R$ to be $F_n(x,t):=\psi_0'(t)(x-t)^{-s-n}$ and repeat the above argument. We obtain that $g$ is $C^\infty\big([1,\infty)\big)$, as claimed and
moreover for any $n\in \N_0$ we have that
	\eqlab{ \label{gn1} g^{(n)}(x) =  -\bar c_{s,n}\int_0^{3/4} \psi_0'(t) (x-t)^{-s-n}\,dt, }
where
	\begin{equation} \label{ctcns2} \bar c_{s,n} = \begin{cases} (-s)(-s-1)\dots (-s-n+1) &\mbox{ for } n\neq 0\\
																1 &\mbox{ for } n=0.\end{cases}\end{equation}
																
Since $\psi(1)=0$ and $g\in C^\infty\big([1,\infty)\big)$ (hence in particular $g\in C_1^{1,1-s}$), thanks to Theorem \ref{thm:probc} we get that the problem \eqref{psil4} admits a unique solution $\psi \in C_1^{1,s}$ given by
\eqlab{ \label{solll} &\psi(x)=\ig  \int_1^x g(t) (x-t)^{s-1} \, dt &\mbox{ in } &(1,\infty),\\
		&\psi(x)=\psi_0(x) &\mbox{ in } & (-\infty,1].} 
 \textcolor{black}{Moreover, we claim that $\psi \in C_0^{1,s}$. Indeed, from Lemma \ref{intreg} we get that $\psi\in C^{\infty}\big( (1,\infty)\big)$. Also 
$ \lim_{x\to 1^+} \psi(x) =0 =\psi(1)$ and so from this and the hypothesis we have that $\psi \in C^{\infty}\big( (1,\infty)\big)  \cap  C^1\lr{[0,1]} \cap C(\R)$, hence $\psi \in AC\lr{[0,\infty)}$. Also for any $x>0$
\[ \int_0^x |\psi'(t)(x-t)^{-s}| \, dt \leq c_s  \|\psi'\|_{L^{\infty}\lr{(0,x)}} x^{1-s} <\infty, \]
and so the claim follows from definition \eqref{ca1s}.}
Therefore, $\psi \in C_0^{1,s}$ is the unique solution of problem \eqref{psil4} and from Lemma \ref{lem:int11} it follows that \eqref{solll} is also the unique solution of problem the \eqref{prob1}.  
\bigskip

We prove now the claim \eqref{psieee}. Let $x=1+\eee$. Then from \eqref{solll} we have that 
\bgs{  \frac{\pi}{\sin \pi s} \psi(1+\eee) =  \int_1^{1+\eee} g(\tau) (1+\eee-\tau)^{s-1} \, d\tau.} 
The change of variables $z= (\tau-1)/\eee$ gives
	\bgs{ \frac{\pi}{\sin \pi s} \psi(1+\eee) = \eee^s  \int_0^1 g(\eee z+ 1) (1-z)^{s-1}\, dz.}
Using definition \eqref{gbla1} we have that 
	\[ g(\eee z+ 1) = -\int_0^{3/4} \psi_0'(t)(\eee z+1-t)^{-s} \, dt ,\] 
hence
	\bgs{ \frac{\pi}{\sin \pi s} \psi(1+\eee) = -\eee^s \int_0^1 \lr{ \int_0^{3/4} \psi_0'(t)(\eee z+1-t)^{-s}\, dt} (1-z)^{s-1}\,dz.} 
Tonelli theorem on $[0,1]\times [0,3/4]$ applied to the function $|\psi_0'(t)|(\eee z+1-t)^{-s} (1-z)^{s-1}$ yields
	\bgs{  \iint_{ [0,1]\times [0,3/4]} \al |\psi_0'(t)|(\eee z+1-t)^{-s} (1-z)^{s-1} d(t,z)\\
=\al  \int_0^{3/4} |\psi_0'(t)| \lr{ \int_0^1 (1-z)^{s-1}(\eee z+1-t)^{-s}\, dz}\, dt.}
We have that $(\eee z+1-t)^{-s}\leq (1-t)^{-s}\leq 4^s$, hence
	 \bgs{  \int_0^{3/4} |\psi_0'(t)| \lr{ \int_0^1 (1-z)^{s-1}(\eee z+1-t)^{-s}\, dz}\, dt \leq \al 4^s  \int_0^{3/4} |\psi_0'(t)| \lr{\int_0^1 (1-z)^{s-1}\, dz}\, dt  \\
			\leq \al \frac{3 \cdot 4^{s-1}}s \sup_{t\in[0,3/4]}|\psi_0'(t)|  ,}
which is finite. Therefore $|\psi_0'(t)|(\eee z+1-t)^{-s} (1-z)^{s-1}\in L^1\big([0,1]\times [0,3/4], d(t,z)\big)$ and by Fubini theorem we have that
	\eqlab{ \label{psibla2}
			 \frac{\pi}{\sin \pi s}  \psi(1+\eee)  =\al  -\eee^s \int_0^{3/4}\psi_0'(t) \lr{ \int_0^1 (\eee z+1-t)^{-s} (1-z)^{s-1}  \, dz}\, dt \\
			=\al -\eee^s \int_0^{3/4} \psi_0'(t)  I_s(\eee,t)\, dt.}
We consider the function $f(z)=(\eee z+1-t)^{-s}$ and make a Taylor expansion with a Lagrange reminder in $0$. Namely, one has that there exists $c \in (0,z)$ such that
	\[ f(z)=\sum_{i=0}^n f^{(i)}(0) \frac{z^i}{i!} + \frac{f^{(n+1)}(c)}{(n+1)!}z^{n+1}.\]
We have that for some $c\in (0,z)$ 
	\[ (\eee z+ 1-t)^{-s} = \sum_{i=0}^n  \frac{\bar c_{s,i}} {i!}\eee^i  (1-t)^{-s-i}  z^i + \frac{\bar c_{s,n+1}}{(n+1)!} \eee^{n+1} (\eee c+1-t)^{-s-n-1} z^{n+1},\]
where $\bar c_{s,i}$ is given in \eqref{ctcns2}. Using this, we have that
	\bgs{ I_s(\eee,t)= \al \sum_{i=0}^n  \frac{\bar c_{s,i}} {i!}\eee^i  (1-t)^{-s-i}\int_0^1 (1-z)^{s-1} z^i\, dz \\ \al +  \frac{\bar c_{s,n+1}}{(n+1)!} \eee^{n+1} (\eee c+1-t)^{-s-n-1} \int_0^1 (1-z)^{s-1} z^{n+1}\, dz.}
We use the definition \eqref{b01} of the Beta function and continue
	\bgs{  I_s(\eee,t) = \sum_{i=0}^n  \frac{\bar c_{s,i}\beta(i+1,s) } {i!}\eee^i  (1-t)^{-s-i} + \frac{\bar c_{s,n+1}\beta(n+2,s)}{(n+1)!} \eee^{n+1} (\eee c+1-t)^{-s-n-1}.}
	In \eqref{psibla2} we obtain that
	\eqlab { \label{psibla3}  \frac{\pi}{\sin \pi s}  \psi(1+\eee) =\al -\eee^{s} \sum_{i=0}^n \frac{\bar c_{s,i}\beta(i+1,s) } {i!} \eee^i \int_0^{3/4}\psi_0'(t) (1-t)^{-s-i}\, dt \\ \al- \eee^{s+n+1} \frac{\bar c_{s,n+1}\beta(n+2,s)}{(n+1)!}  \int_0^{3/4} \psi_0'(t)  (\eee c+1-t)^{-s-n-1} \, dt.}
We notice that $(\eee c+1-t)^{-s-n-1}\leq 4^{s+n+1}$ and it follows that
	\[ \bigg| \int_0^{3/4} \psi_0'(t)  (\eee c+1-t)^{-s-n-1} \, dt\bigg| \leq 3\cdot  4^{s+n} \sup_{t \in [0,3/4]} |\psi_0'(t)|,\] which is finite.
 We define then the finite quantities
	\bgs{ C_{s,\psi_0,i}:=\al  -\frac{\bar c_{s,i} \beta(i+1,s) }{i!} \int_0^{3/4} \psi_0'(t)(1-t)^{-s-i}\, dt\\ =\al  \frac{\beta(i+1,s) } {i!}   g^{(i)}(1) \quad  \mbox{for} \quad  i=0,\dots,n } and
	 \bgs{ C_{s,\psi_0,n+1}: = \al -\frac{\bar c_{s,n+1} \beta(n+2,s) }{(n+1)!} \int_0^{3/4} \psi_0'(t)(\eee c +1-t)^{-s-n-1}\, dt \\
=\al  \frac{\beta(n+2,s) } {(n+1)!}   g^{(n+1)}(\eee c +1), }
where we have used \eqref{gn1}.  

It follows in \eqref{psibla3} that
	\[ \frac{\pi}{\sin \pi s}\psi(1+\eee) = \sum_{i=0}^{n+1} C_{s,\psi_0,i}  \textcolor{black}{\eee^{s+i}} . \]
 This gives for $\eee \to 0$ that
	\[\psi(1+\eee)= \kappa\eee^s + \mathcal O (\eee^{s+1}),\]
where 
	\bgs{ \kappa =C_{s,\psi,0}= \beta(1,s) g(1) \textcolor{black}{=} -   \beta(1,s) \int_0^{3/4} \psi_0'(t)(1-t)^{-s} \, dt. }
Since $-\psi_0'(x)>0$ in $[0,3/4)$ by hypothesis (see \eqref{fifi41}), we have that 
\[-\int_0^{3/4} \psi_0'(t)(1-t)^{-s} \, dt>0.\] 
This implies that $\kappa$  is strictly positive and it concludes the proof of the Lemma.
\end{proof}
 \bigskip

Blowing up the function built in Lemma \ref{ls1}, we obtain a sequence of Caputo-stationary functions in $(0,\infty)$ that on $(0,\infty)$ tends to the function $x^s$.  

\begin{lem}\label{ls1}
There exists a sequence $(v_j)_ {j \in \N}$ of functions $v_j \in  C^{1,s}_{-j}\cap C^{\infty}\big((0,\infty)\big)$ such that  for any $j \in \N$
	\begin{equation} \label{pbvj1}
		\begin{aligned}
			D^s_{-j} v_j(x)&= 0 &\text{ in } &(0,\infty),	\\
			v_j(x) &= 0 &\text{ in } & \Big[-\frac{j}4,0\Big]
		\end{aligned} 
	\end{equation}  
and for any $x>0$
	 \eqlab{ \label{limvj} \lim_{j\to \infty} v_j(x)=\textcolor{black}{\kappa} x^s,  } for some $\textcolor{black}{\kappa}>0$. 
Moreover, on any bounded subinterval $I\subseteq (0,\infty)$ the convergence is uniform.
\end{lem}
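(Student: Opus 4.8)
The plan is to obtain $v_j$ from the function $\psi$ of Lemma~\ref{lem1} by a scaling (blow-up) that sends the point $1$ to the origin. More precisely, I would fix $\psi_0$ satisfying \eqref{fifi41} once and for all, let $\psi\in C_0^{1,s}\cap C^\infty\big((1,\infty)\big)$ be the corresponding solution from Lemma~\ref{lem1}, and then for each $j\in\N$ set
\bgs{ v_j(x) := \lambda_j\,\psi\!\left(1+\frac{x}{j}\right), \qquad x\in\R, }
with a normalising constant $\lambda_j>0$ to be chosen (I expect $\lambda_j = j^s/\kappa$, or something comparable, so that the limit is exactly $\kappa x^s$ — one has to be a little careful and recompute $\kappa$ for the rescaled object, but the constant is harmless). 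Here the causality convention makes $\psi$ constant on $(-\infty,0]$, so $v_j$ is constant on $(-\infty,-j]$, which is the initial point. The affine change of variable $y\mapsto 1+y/j$ maps $(1,\infty)$ to $(0,\infty)$, maps $(-\infty,1]$ to $(-\infty,0]$, and maps the flat zone $[3/4,1]$ of $\psi_0$ to $[-j/4,0]$; since $\psi\equiv\psi_0\equiv 0$ on $[3/4,1]$, we get $v_j\equiv 0$ on $[-j/4,0]$, which is the second line of \eqref{pbvj1}.

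The first real step is to check that the Caputo equation is preserved under this scaling. I would verify directly from \eqref{caputo} that if $D_1^s\psi(x)=0$ for all $x>1$ then, writing $x = 1+ t/j$ and changing variables $t'=1+t/j$ inside the integral defining $D^s_{-j}v_j$, one gets $D^s_{-j}v_j(x) = \lambda_j j^{s-1} \, (D_1^s\psi)(1+x/j) = 0$ for all $x>0$. (Strictly, $\psi$ solves $D_0^s\psi=0$ in $(1,\infty)$ with data on $(-\infty,1]$, but by Lemma~\ref{lem:int11} this is the same as the integro-differential equation with initial point $1$, so the scaling computation goes through on $(b,\infty)=(1,\infty)$.) One also has to record that $v_j\in C^{1,s}_{-j}$: this is immediate from $\psi\in C_0^{1,s}$ together with the change of variables, since absolute continuity on compact subintervals and the $L^1$ weight condition in \eqref{ca1s} are both stable under affine rescaling; and $v_j\in C^\infty\big((0,\infty)\big)$ follows from $\psi\in C^\infty\big((1,\infty)\big)$, which Lemma~\ref{lem1} (via Lemma~\ref{intreg}) already gives.

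The second step, and what I expect to be the main point, is the convergence \eqref{limvj}. Fix $x>0$. Then $1 + x/j \to 1^+$ as $j\to\infty$, i.e. $x/j$ plays the role of the $\eee$ in Lemma~\ref{lem1}, so \eqref{psieee} gives $\psi(1+x/j) = \kappa (x/j)^s + \mathcal O\big((x/j)^{s+1}\big)$. Multiplying by the normalisation $\lambda_j$ (chosen $\sim j^s$) yields $v_j(x) = \kappa x^s + \mathcal O(x^{s+1}/j) \to \kappa x^s$. For the uniform convergence on a bounded subinterval $I\subseteq(0,\infty)$, say $I\subseteq[\delta,M]$: the error term in \eqref{psieee} is $\mathcal O(\eee^{s+1})$ \emph{uniformly} in the sense that the implied constant depends only on $\psi_0$ (indeed, tracing through the proof of Lemma~\ref{lem1}, the remainder was bounded by $3\cdot 4^{s+1}\sup|\psi_0'|\cdot|\beta(2,s)|/\cdots$ times $\eee^{s+1}$, with no hidden dependence on the base point), so for $x\in[\delta,M]$ we get $|v_j(x)-\kappa x^s|\le C_{s,\psi_0}\, M^{s+1}/j \to 0$ uniformly. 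The only mildly delicate bookkeeping is making sure the constant $\kappa$ appearing in \eqref{limvj} is literally the $\kappa$ of Lemma~\ref{lem1} and that $\lambda_j$ is pinned down accordingly; since we are always free to absorb a fixed positive constant into the choice of $\psi_0$ (scaling $\psi_0$ scales $\kappa$ linearly), there is no loss, and $\kappa>0$ is inherited from Lemma~\ref{lem1}. This completes the proof.
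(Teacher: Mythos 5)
Your proposal is correct and follows essentially the same route as the paper: the blow-up $v_j(x)=j^s\psi(1+x/j)$ of the function from Lemma \ref{lem1}, the observation that the flat zone $[3/4,1]$ rescales to $[-j/4,0]$, the change of variables in the Caputo integral, and the asymptotics \eqref{psieee} with $\eee=x/j$ for the (uniform) convergence. Only note that the change of variables $t\mapsto 1+t/j$ on $\int_{-j}^x$ sends the initial point $-j$ to $0$, so the correct identity is $D^s_{-j}v_j(x)=\lambda_j j^{-s}\,D_0^s\psi(1+x/j)$ rather than $\lambda_j j^{s-1}(D_1^s\psi)(1+x/j)$ --- harmless here since the right-hand side vanishes either way, but the history on $[0,1]$ should not be dropped from the rescaled derivative.
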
	
A qualitative example of a sequence described in Lemma \ref{ls1} is depicted in Figure \ref{fign:Lem42}.

\begin{center}
 \begin{figure}[htpb]
	\hspace{0.6cm}
	\begin{minipage}[b]{0.85\linewidth}
	\centering
	\includegraphics[width=0.95\textwidth]{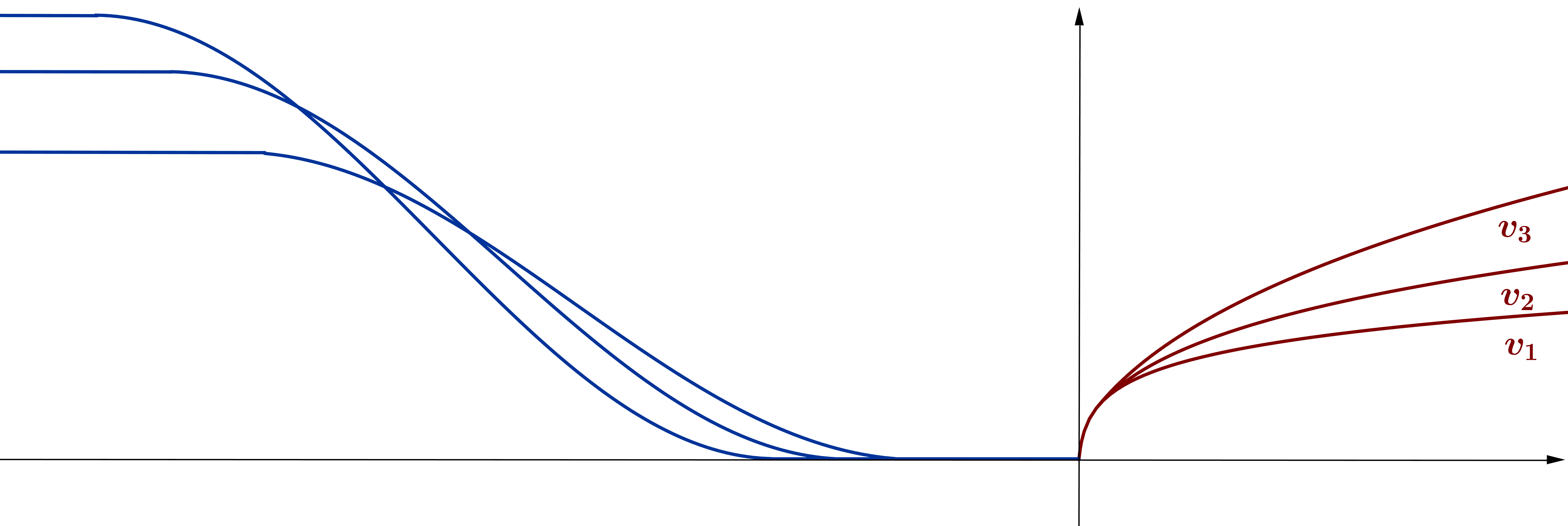}
	\caption{A sequence of Caputo-stationary functions in $(0,\infty)$}   
	\label{fign:Lem42}
	\end{minipage}
\end{figure}
\end{center} 
\begin{proof}
We consider the function $\psi$ solution of the problem \eqref{prob1} as introduced in Lemma \ref{lem1}, and define for any $j\in \N$
\[ v_j(x) := j^s \psi\bigg(\frac{x}{j} +1\bigg).\]  We prove that for any $j\in \N$ the function $v_j$ is solution of the problem \eqref{pbvj1}.

Recalling Lemma \ref{lem1}, we have that $\psi(x)=\psi_0(x)$ in $(-\infty,1]$, hence $v_j(x)=j^s \psi_0 \displaystyle \bigg(\frac{x}{j} +1\bigg)$ when $\displaystyle \frac{x}{j} +1 \leq 1$, i.e. when $x\leq 0$. Moreover, from conditions \eqref{fifi41} we have that $v_j(x)=j^s \psi_0(0)$ when $\displaystyle \frac{x}{j} +1 \leq 0$, hence when $x\leq -j$ and $v_j(x)=0$ when $\displaystyle\frac{3}{4}\leq \displaystyle \frac{x}{j} +1 \leq 1$, hence for $x\in \displaystyle \left[-\frac{j}4,0\right]$.  
According to the fact that $\psi \in C_0^{1,s}\cap C^{\infty}\big((1,\infty)\big)$, we have that $v_j\in C_{-j}^{1,s} \cap C^{\infty}\big((0,\infty)\big)$.  Furthermore, since $\psi$ is solution of the problem \eqref{prob1}, we have by the definition \eqref{caputo} that
	\bgs{	D_{-j}^s v_j(x) =\al \igs  \int^x_{-j} v_j'(t)(x-t)^{-s} \, dt\\
					= \al  \frac{ j^{s-1}}{\Gamma(1-s)} \int^x_{-j}\psi'\Big(\frac{t}{j}+1\Big) (x-t)^{-s} \, dt.}
We use the change of variables $y= t/j +1$ and obtain
	\bgs{	D_{-j}^s v_j(x) =\al  \igs \int_0^{x/j+1} \psi'(y)\Big(\frac{x}{j}+1-y\Big)^{-s}\, dy \\
					=\al D_0^s \psi \Big(\frac{x}{j} +1\Big).}
This implies that $D_{-j}^s v_j(x) =0 $ whenever  $D_0^s \psi \displaystyle \left(\frac{x}{j} +1\right) =0$. From \eqref{prob1}, this happens when $\displaystyle \frac{x}j +1 >1$, hence for $x>0$. 
\textcolor{black}{And so in conclusion} we have that for any $j\in \N$  the functions $v_j \in C_{-j}^{1,s} \cap C^{\infty}\big((0,\infty)\big)$ satisfy
	\bgs{ D_{-j}^s v_j(x) &= 0 &\mbox{ in } &  (0,\infty), \\ 
			v_j(x) &= 0 &\mbox{ in } & \left[-\frac{j}4,0\right] }
and
	\bgs{ v_j(x)&=j^s\psi_0\lr{\frac{x}j +1} 	&\mbox{ in } &  (-\infty,0],\\
				v_j(x)&=j^s\psi_0(0) 	&\mbox{ in } &  (-\infty,-j].}
In particular, $v_j$ is solution of the problem \eqref{pbvj1} for any $j \geq 1$.
\bigskip

We prove now that as $j \to \infty$, the sequence $v_j (x)$ tends on $(0,\infty)$ to the function $\kappa x^s$, for a suitable constant $\kappa>0$. Using \eqref{psieee}, for $x>0$ and for a large $j$ we have that
	\[ v_j(x) = j^s \psi \left(\frac{x}{j}+1 \right) = j^s \left( \kappa \frac{x^s}{j^s} + \mathcal O \left(\frac{x^{s+1}}{j^{s+1}}\right)\right) = \kappa x^s + \mathcal O \left(\frac{x^{s+1}}{j}\right).\]
By sending $j$ to infinity we obtain that 
	\[ \lim_{j\to \infty} v_j(x)=\kappa x^s.\]
On any bounded subinterval $I\subseteq (0,\infty)$, we have that
	\bgs{ \lim_{j \to \infty} \sup_{x\in I} |v_j(x) - \kappa x^s| = 0. } It follows also that on any bounded subinterval $I\subseteq (0,\infty)$ the sequence $v_j$ is uniformly bounded.
This concludes the proof of the Lemma. 
\end{proof}

\section{Existence of a Caputo-stationary function with arbitrarily \\large number of derivatives prescribed}\label{sectma}

Using Lemma \ref{ls1} we prove that there exists a Caputo-stationary function with arbitrarily large number of derivatives prescribed. Namely, for any $m\in \N$ we want to prove that we can find a Caputo-stationary function $v$ and a point $p$, such that the derivatives of 
$v$ in $p$ vanish until the order $m-1$. More precisely:

\begin{thm}\label{thm4}
 For any $m \in \N$ there exist a point $p>0$, a constant $R>0$ and a function $v \in C_{-R}^{1,s} \cap C^{\infty} \big((0,\infty)\big)$ such that
	\eqlab{ \label{cc1} 
				D^s_{-R} v(x)&=0 &\text{ in } &(0, \infty), \\
		v(x)&=0 &\text{ in } & \Big [-\frac{R}4,0\Big]}	
and 
	\eqlab{ \label{cc2}
			&v^{(l)}(p)=0 & & \text{ for any } \quad  l< m\\
		&v^{(m)}(p)=1.&&}
	\end{thm}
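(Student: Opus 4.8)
The plan is to build $v$ from the sequence $(v_j)$ of Lemma \ref{ls1}, exploiting the fact that the limit function $\kappa x^s$ has all derivatives nonvanishing on $(0,\infty)$. Recall that $(x^s)^{(l)} = s(s-1)\cdots(s-l+1)\,x^{s-l}$, and since $s\in(0,1)$ none of the factors $s-i$ ($0\le i\le l-1$) is zero, so $(x^s)^{(l)}(x)\neq 0$ for every $x>0$ and every $l\in\N_0$. Fix $m\in\N$. The idea is: on a bounded subinterval of $(0,\infty)$ the functions $v_j$ converge uniformly to $\kappa x^s$ (Lemma \ref{ls1}); by the interior regularity of $v_j$ on $(0,\infty)$ (they are $C^\infty$ there, being rescalings of $\psi$, which is $C^\infty((1,\infty))$ by Lemma \ref{lem1}) and, crucially, by the explicit representation formula \eqref{solc1}, the derivatives $v_j^{(l)}$ also converge uniformly on compact subsets of $(0,\infty)$ to $(\kappa x^s)^{(l)} = \kappa\,(x^s)^{(l)}$ for each fixed $l$.

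Given this, the argument proceeds as follows. Pick any point $p_0>0$. Since $\kappa\,(x^s)^{(m)}(p_0)\neq 0$, for $j$ large enough $v_j^{(m)}(p_0)$ is bounded away from $0$. So define a candidate $v$ by rescaling $v_j$ appropriately: first take $j$ so large that $v_j^{(m)}$ is nonzero near $p_0$, then consider translations/dilations to normalize. Concretely, one would like to find a point $p$ where the lower-order derivatives all vanish. The function $\kappa x^s$ does not have a point where $v^{(l)}$ vanishes for $l<m$ (its derivatives never vanish), so we cannot just read off $p$ from the limit; instead we must use the freedom in choosing which monomial/power we are matching together with an affine reparametrization. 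The right move is: for $j$ large, set $p$ to be any fixed point in $(0,\infty)$, let $c_l := v_j^{(l)}(p)$ for $l=0,\dots,m$; since $c_m\neq 0$ for $j$ large, replace $v_j$ by
\[
v(x) := \frac{1}{c_m}\left( v_j(x) - \sum_{l=0}^{m-1}\frac{c_l}{l!}(x-p)^l \cdot (\text{correction}) \right),
\]
but this breaks Caputo-stationarity unless the subtracted terms are themselves Caputo-stationary. So the cleaner approach, matching the paper's philosophy, is to \emph{iterate}: we actually want not a single $v_j$ but a \emph{finite linear combination} of Caputo-stationary functions, each of the form (a translate/dilate of) $v_{j_i}$, tuned so that the derivatives at $p$ form a prescribed vector. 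By linearity of $D^s_{-R}$ (taking $R=\max_i j_i$, after noting all the relevant functions vanish on a left-neighborhood of $0$, which stays true under the constructions in Lemma \ref{ls1}), any such combination is again Caputo-stationary in $(0,\infty)$ and vanishes on $[-R/4,0]$.

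Hence the heart of the matter is a \emph{linear algebra / nondegeneracy} argument: produce $m+1$ Caputo-stationary functions $w_0,\dots,w_m$ (each built from Lemma \ref{ls1}, possibly with different initial points and different scalings, or evaluated at different base points) such that the vectors $\big(w_i(p), w_i'(p),\dots, w_i^{(m)}(p)\big)\in\R^{m+1}$, $i=0,\dots,m$, are linearly independent; then one solves a linear system for coefficients $\lambda_i$ so that $v=\sum_i\lambda_i w_i$ satisfies \eqref{cc2}. The natural way to get linear independence is to use dilated copies $x\mapsto v_j(\mu_i x)$ or scaled versions whose leading behavior near $p$ is governed by $x^s, x^{s+1},\dots$ — i.e. exploit that the asymptotic expansion \eqref{psieee} of $\psi$ actually has \emph{all} orders $\eee^{s+i}$, so by taking finitely many derivatives of the representation formula \eqref{solc1} (Lemma \ref{intreg} style) we see the functions $v_j$, near their left endpoint, look like $\kappa x^s + $ higher powers, and these powers $x^s, x^{s+1},\dots,x^{s+m}$ are linearly independent functions with a Wronskian that is nonzero at any $p>0$. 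That is the main obstacle: carefully setting up the family of auxiliary Caputo-stationary functions and verifying the associated $(m+1)\times(m+1)$ matrix of derivatives at $p$ is invertible (equivalently, that the relevant Wronskian does not vanish), uniformly enough in $j$ that we may pass to the limit and keep invertibility. Once invertibility is in hand, solving the linear system, checking $v\in C^{1,s}_{-R}\cap C^\infty((0,\infty))$ via Lemmas \ref{ls1} and \ref{intreg}, and verifying \eqref{cc1}–\eqref{cc2} is routine.
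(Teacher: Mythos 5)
Your overall strategy (reduce the statement to a linear-algebra fact about the vectors of derivatives of Caputo-stationary functions, exploiting that $(x^s)^{(l)}$ never vanishes for $s\in(0,1)$) has the same skeleton as the paper's proof, but the two steps you yourself flag as ``the main obstacle'' are precisely where the argument is incomplete, and at least one of your proposed mechanisms would actually fail. First, Lemma \ref{ls1} only provides uniform convergence of $v_j$ to $\kappa x^s$ in $C^0$ on bounded subintervals; the locally uniform convergence of the derivatives $v_j^{(l)}$ to $\kappa\,(x^s)^{(l)}$, on which your construction leans, is not established anywhere and would require additional estimates from the representation formula \eqref{undiff1}. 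Second, and more seriously, your route to producing $m+1$ Caputo-stationary functions with linearly independent derivative vectors at a \emph{common} point $p$ is not viable as described: for dilated copies $x\mapsto v_j(\mu x)$ the limiting derivative vectors are $\big(\mu^l\kappa (x^s)^{(l)}(\mu p)\big)_l=\mu^s\kappa\big((x^s)^{(l)}(p)\big)_l$ by homogeneity of $x^s$, hence all \emph{parallel}, so the limiting matrix has rank one and no uniform-in-$j$ invertibility can hold; whereas using different base points $p_1,\dots,p_{m+1}$ does give a Vandermonde-type invertible limit matrix, but then you cannot assemble a single linear combination evaluated at one point without translating the $v_j$, which destroys the condition $v=0$ on $[-R/4,0]$. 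One would have to extract independence from the higher-order terms $\eee^{s+1},\dots,\eee^{s+m}$ of the expansion \eqref{psieee}, which you mention but do not carry out.

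The paper sidesteps both difficulties with a duality argument. It supposes that the span $\mathcal V$ of all vectors $\big(v(x),\dots,v^{(m)}(x)\big)$ with $(v,x)\in\mathcal Z$ lies in a hyperplane with normal $(c_0,\dots,c_m)\neq 0$, so that $\sum_{i=0}^m c_i v_j^{(i)}(x)=0$ for every $j$ and every $x>0$. Pairing this identity with $\varphi\in C_c^\infty\big((0,\infty)\big)$ and integrating by parts transfers all derivatives onto $\varphi$, so only the $C^0$ convergence of Lemma \ref{ls1} (plus dominated convergence) is needed to pass to the limit and obtain $\sum_i c_i\kappa (x^s)^{(i)}\equiv 0$ on $(0,\infty)$; multiplying by $x^{m-s}$ produces a polynomial vanishing identically, and since $s(s-1)\cdots(s-i+1)\neq 0$ for $s\in(0,1)$ this forces $c=0$, a contradiction. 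No explicit independent family, no convergence of derivatives, and no Wronskian computation are required. I recommend replacing your construction with this contradiction argument.
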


\begin{proof}

We consider $\mathcal Z$ to be the set of the pairs $(v,x)$ of all functions $v\in C_{-R}^{1,s}\cap C^{\infty} \big((0,\infty)\big)$ satisfying conditions \eqref{cc1} for some $R>0$, and $x\in (0,\infty)$. More precisely
	\textcolor{black}{\bgs{\mathcal Z = \Big\{ (v,x) \text{ s.t. } x\in (0,\infty) \mbox{ and } \exists\, R>0 \text{ s.t. } & v\in C_{-R}^{1,s} \cap C^{\infty} \big((0,\infty)\big), D^s_{-R} v=0 \text{ in } (0, \infty), v =0 \text{ in } \Big [-\frac{R}4,0\Big]  \Big\}. }}
We fix $m\in \N$. To each pair $(v,x)\in \mathcal Z$ we associate the vector $ \big(v(x), v'(x), \dots, v^{(m)}(x)\big) \in \R^{m+1}$ and consider $\mathcal V$ to be the vector space spanned by this construction. We claim that this vector space exhausts $\R^{m+1}$. 
Suppose by contradiction that this is not so and $\mathcal V$ lays in a hyperplane. Then there exists a vector $(c_0,c_1,\dots,c_m)\in \R^{m+1}\setminus \{0\}$ orthogonal to any vector $  \big(v(x), v'(x), \dots, v^{(m)}(x)\big) $ with $(v,x) \in \mathcal Z$, hence 
	\[ \sum_{i=0}^m c_i v^{(i)}(x) = 0.\] 
We notice that for any $j\geq 1$ the pairs $(v_j,x)$ with $v_j$ satisfying problem \eqref{pbvj1} and $x\in (0,\infty)$ belong to the set $\mathcal Z$. It follows that for any $j\geq 1$ we have that
	\eqlab{  \label{bla10} \sum_{i=0}^m c_i v_j^{(i)} (x) =0.}

Let $\varphi \in  C^{\infty}_c\big((0,\infty)\big)$. Integrating by parts we have that for any $i\in \N$
	\bgs{ \int_\R v_j^{(i)}(x)\varphi(x)\, dx= (-1)^i \int_\R v_j(x) \varphi^{(i)}(x)\, dx.} 
Thanks to Lemma \ref{ls1}, the sequence $v_j$ is uniformly convergent to $\kappa x^s$ on any bounded subinterval $I\subseteq (0,\infty)$, for some $\kappa>0$. By the  dominated convergence theorem we have that
	\[ \lim_{j \to \infty}  \int_\R v_j^{(i)}(x)\varphi(x)\, dx = (-1)^i\lim_{j \to \infty}  \int_{\R} v_j(x) \varphi^{(i)}(x)\, dx =   (-1)^i \int_{\R} \kappa x^s \varphi^{(i)}(x) \, dx.\] We integrate by parts one more time and obtain that
	\[  (-1)^i \int_\R \kappa x^s \varphi^{(i)}(x)\, dx = \int_\R \kappa (x^s)^{(i)} \varphi(x)\, dx.\]
It follows that
	\[ \lim_{j \to \infty}  \int_\R v_j^{(i)}(x)\varphi(x)\, dx =   \int_\R  \kappa ( x^s)^{(i)} \varphi(x)\, dx .\] Multiplying by $c_i$ and summing up, we obtain that
\bgs{  \lim_{j\to \infty}  \int_\R \sum_{i=0}^m c_i v_j^{(i)}(x) \varphi(x)\, dx  =  \int_\R  \sum_{i=0}^m c_i \kappa ( x^s)^{(i)} \varphi(x)\, dx .}
From this and equality \eqref{bla10} we finally obtain that 
	\[  0 = \int_{\R}\sum_{i=0}^m c_i \kappa (x^s)^{(i)} \varphi(x)\, dx \] for any $\varphi \in C_c^{\infty}\big((0,\infty)\big)$. 
This implies that on $(0,\infty)$
	\[ 0=  \kappa \sum_{i=0}^m c_i(x^s)^{(i)} =\kappa \sum_{i=0}^m c_i s(s-1)\dots(s-i+1)x^{s-i}.   \]
We divide this relation by $\kappa$ (that is strictly positive) and multiply by $x^{m-s}$ and obtain that for any $x \in (0,\infty) $
	\[ \sum_{i=0}^m c_i s(s-1)\dots(s-i+1) x^{m-i}  =0.\]
We have here a polynomial that vanishes for any positive $x$. Thanks to the fact the $s \in(0,1)$ the product $s(s-1) \dots (s-i+1)$ is never zero, therefore one must have $c_i= 0 $ for every $i\in\N_0$. This is a contradiction since the vector $(c_o,\dots,c_m)$ was assumed not null. Hence the vector space $\mathcal V$ exhausts $\R^{m+1}$ and there exists $(v,p) \in \mathcal Z $ such that $\big( v(p), v'(p),\dots, v^{(m)}(p)\big)=(0,0,\dots,1)$. This concludes the proof of Theorem \ref{thm4}.
\end{proof}

\section{Proof of Theorem \ref{thm:thm1}}\label{sectthm1}
This section is dedicated to the proof of Theorem \ref{thm:thm1}. We translate and rescale the function $v$ as given in Theorem \ref{thm4}. The derivatives of the rescaled function vanish in $0$ until the order $m-1$, and the $\mbox{m}^{th}$ derivative equals $1$. Using a Taylor expansion, we obtain that this rescaled function well approximates the monomial $q(x)=c_m x^m$.

\begin{proof}[Proof of Theorem \ref{thm:thm1}]	
In Section \ref{sbssc} we explained why it suffices to prove that for any $m\in \N$ and any monomial $q_m(x)=x^m$ there exists a Caputo-stationary function $u$ such that 
	\[ \|u-q_m\|_{C^k\big([0,1]\big)} < \eee.\] 
For an arbitrary $m\in \N$, we take for convenience the monomial \[ q_m(x)=\frac{x^{m} }{m!}. \] \textcolor{black}{Also, we consider $p,R>0$ and the function $v$ as introduced in Theorem \ref{thm4}} and 
we translate and rescale $v$. Let $\delta $ be a positive quantity (to be taken conveniently small  in the sequel) and let $u$ be the function
		 \[ u(x):= \frac{v(\delta x +p)}{\delta^m} .\]
Since $v\in C_{-R}^{1,s} \cap C^{\infty}\big((0,\infty)\big)$ we have that $u \in  C_{ \frac{-p-R}{\delta}}^{1,s} \cap C^{\infty} \lr{\Big(-\displaystyle \frac{p}{\delta}, \infty\Big)}$ and 
	\bgs{  \Gamma(1-s) D_{ \frac{-p-R}{\delta} }^s u(x) =\al    \int_{  \frac{-p-R}{\delta} }^x u'(t)(x-t)^{-s}\, dt \\
		= \al  \delta^{1-m} \int_{  \frac{-p-R}{\delta} }^x v'(\delta t+ p) (x-t)^{-s}\, dt.}
We change the variable $y=\delta t +p$ and obtain that
	\bgs{ \Gamma(1-s)  D^s_{\frac{-p-R}{\delta} } u(x)  = \al  \delta^{s-m} \int_{-R}^{\delta x +p} v'(y) (\delta x+ p-y)^{-s}\, dy\\=\al 
\Gamma(1-s)  D^s_{-R} v(\delta x +p).}
Let  $a:=\displaystyle \frac{-p-R}{\delta}$. Using the properties \eqref{cc1} of $v$ we obtain that
	\bgs{ D_a ^su(x) =0 \text{ in } \Big(-\frac{p}{\delta}, \infty\Big). }  
With this notation, we have that $u\in C_a^{1,s}$ and since $\displaystyle -\frac{p}{\delta}<0$, that $D_a ^su(x) =0 \text{ in } [0, \infty). $
 
Furthermore, from the conditions \eqref{cc2} and the definition of $u$ we get that
	\bgs{ u^{(l)}(0)& = \delta^{l-m}v^{(l)}(p)= 0 & & \text{ for any } \quad  l< m\\
		u^{(m)}(0)& = v^{(m)}(p)= 1.&&}
Let for any $x>-p/\delta$ \[ g(x):= u(x) -q_m(x).\] We have that
	\eqlab{ \label{gr1} g^{(l)}(0)&= 0  &\mbox{ for any } &l\leq m\quad  \mbox{and}\\
		g^{(m+l)} (x)&= u^{(m+l)}(x) & \mbox{ for any } &l\geq 1 .} 
Moreover for $l\geq 1 $ we have that $u^{(m+l)}(x)= \delta^l v^{(m+l)} (\delta x +p)$ and it follows that
	\[| g^{(m+l)}(x) | = \delta^{l} |v^{(m+l)}(\delta x+p) | .\]
Hence for $x \in [0,1]$ we have the bound
	\eqlab{ \label{bg1} | g^{(m+l)}(x) |\leq \delta^l \sup_{y \in [p, p+\delta] } |v^{(m+l)} (y)| = \tilde C \delta^l,} where $\tilde C$ is a positive constant.
We consider the derivative of order $k$ of $g$ and take its Taylor expansion with the Lagrange reminder. \textcolor{black}{Thanks to \eqref{gr1}, for some $c\in (0,x)$ we have that
	\[ g^{(k)}(x)= \sum_{i=\max\{ k,m+1\}} ^{k+m+1} g^{(i)}(0) \frac{x^{i-k}}{(i-k)!} +g^{(m+k+2)} (c)  \frac{x^{m+2}}{(m+2)!} .\] }
Using \eqref{bg1} for any $x\in [0,1]$, eventually renaming the constants we have that 
\textcolor{black}{\[ |  g^{(k)}(x)| \leq C \sum_{i={\max\{1,k-m\}}}^{k+2} \delta^i, \]}
therefore for $k\in \N_{0}$
\[ |  g^{(k)}(x)| = |q_m^{(k)}(x) -u^{(k)}(x)|= \mathcal O(\delta) .  \]
If we let $\delta\to 0$ we have that $u^{(k)}$ approximates $q_m^{(k)}$. Finally, for any small $\eee(\delta)>0$
	\[ \|u-q_m\|_{C^k\big([0,1]\big)} < \eee\] and this concludes the proof of Theorem \ref{thm:thm1}.
\end{proof}

\section*{Appendix}
In this Appendix, we want to give some explicit examples related to some Lemmas that were introduced in this paper.

At this purpose, to give an example of Lemma \ref{lem:int11}, we take $a=0, b=1, s=1/2$ and the function $\varphi(x)=x$ in $[0,1]$ and $\varphi(x)=0$ in $(-\infty,0)$. We built the function $u\in C_0^{1,1/2}$ that satisfies
	\eqlab{\label{es1} D_0^{\frac{1}2} u(x)& =0 &\text{ in } & (1,\infty),\\
			u(x)&=x &\text{ in }  & [0,1],\\
			u(x)&=0&\text{ in }  & (-\infty,0) .}
Let \[ g(x):= -\int_0^1 \frac{\varphi'(t)}{\sqrt{x-t}} \, dt=-\int_0^1 (x-t)^{\frac{1}2}\, dt = 2\sqrt{x-1}-2\sqrt{x}.\] 
According to Lemma \ref{lem:int11} and to Theorem \ref{thm:probc}, the unique solution of the problem \eqref{es1} is given by 
\[ u(x) =u(1)+  \frac{1}{\pi}\int_1^x \frac{g(t)}{\sqrt{x-t}}\, dt,\]
and computing, this gives
\[ u(x)=\frac{2}{\pi} \lr{x\arcsin \frac{1}{\sqrt x}-\sqrt{x-1}}. \]
We depict this function in the following Figure \ref{fign:es1}.
\begin{center}
\begin{figure}[htpb]
	\hspace{0.6cm}
	\begin{minipage}[b]{0.85\linewidth}
	\centering
	\includegraphics[width=0.90\textwidth]{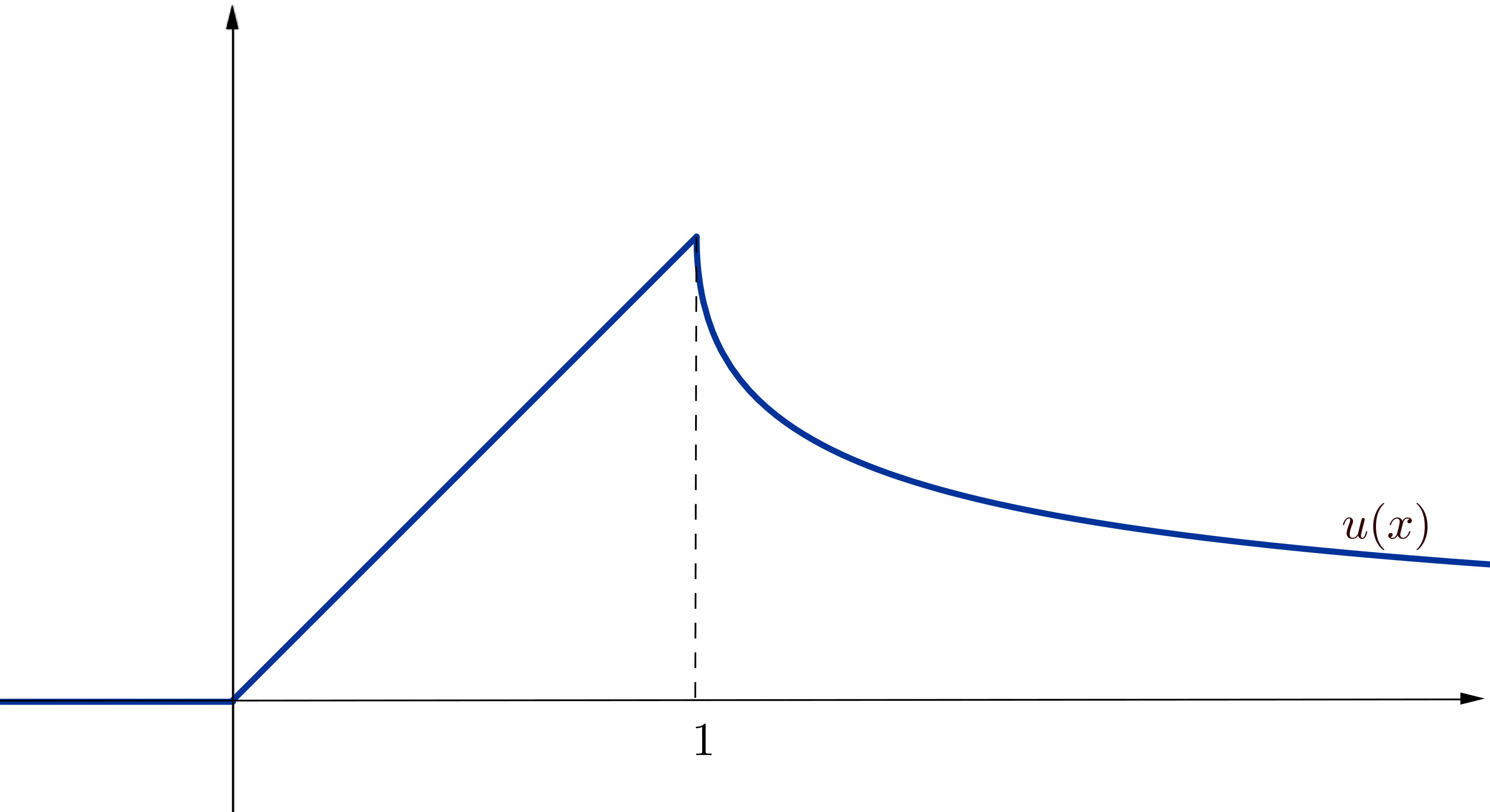}
	\caption{A Caputo-stationary function in $(1,\infty)$ prescribed on $(-\infty,1]$}   
	\label{fign:es1}
	\end{minipage}
\end{figure} 
\end{center}

In Lemma \ref{lem1}, we take $a=0, b=1, s=1/2$ and the quadratic function
\sys [\psi_0(x) =]{ &\frac{16}9 \lr{x-\frac{3}4}^2 &\mbox{ in } &\lrq{0,\frac{3}4},\\
                       	& 0  & \mbox{ in } &\lrq{\frac{3}4,1}.}
                So we are looking for a function $\psi  \in C_0^{1,1/2}$ that satisfies   
                \eqlab{\label{es2} D_0^{\frac{1}2} \psi(x)& =0 &\text{ in } & (1,\infty),\\
			\psi(x)&=\psi_0(x) &\text{ in }   & (-\infty,1] .} 
			The solution, according again to Lemma \ref{lem:int11} and to Theorem \ref{thm:probc} is given by
			\[\psi(x) = \frac{1}{\pi}\int_1^{x}g(t)(x-t)^{-\frac{1}2} \, dt, \quad \mbox{ where } \quad g(t)= -\int_0^{\frac{3}4}  \psi_0'(t) (x-t)^{-\frac{1}2}\, dt.\]
Computing this, we have that
\[ g(t)= -\frac{16}{27} \lr{ 8t^{\frac{3}2}-9t^{\frac{1}2} -(4t-3)^{\frac{3}2}}\] 
and  
\[ \psi(x) =\frac{1}{27\pi} \lrq{ 27 \pi + \sqrt{x-1} (-48x+52) +\arcsin \frac{1}{\sqrt{x}} (96x^2-144x) -\arcsin \frac{1}{\sqrt{4x-3}} (96x^2-144x+54) }.\]
We depict this function in the following Figure \ref{fign:es2}.	
\begin{center}
\begin{figure}[htpb]
	\hspace{0.6cm}
	\begin{minipage}[b]{0.85\linewidth}
	\centering
	\includegraphics[width=0.90\textwidth]{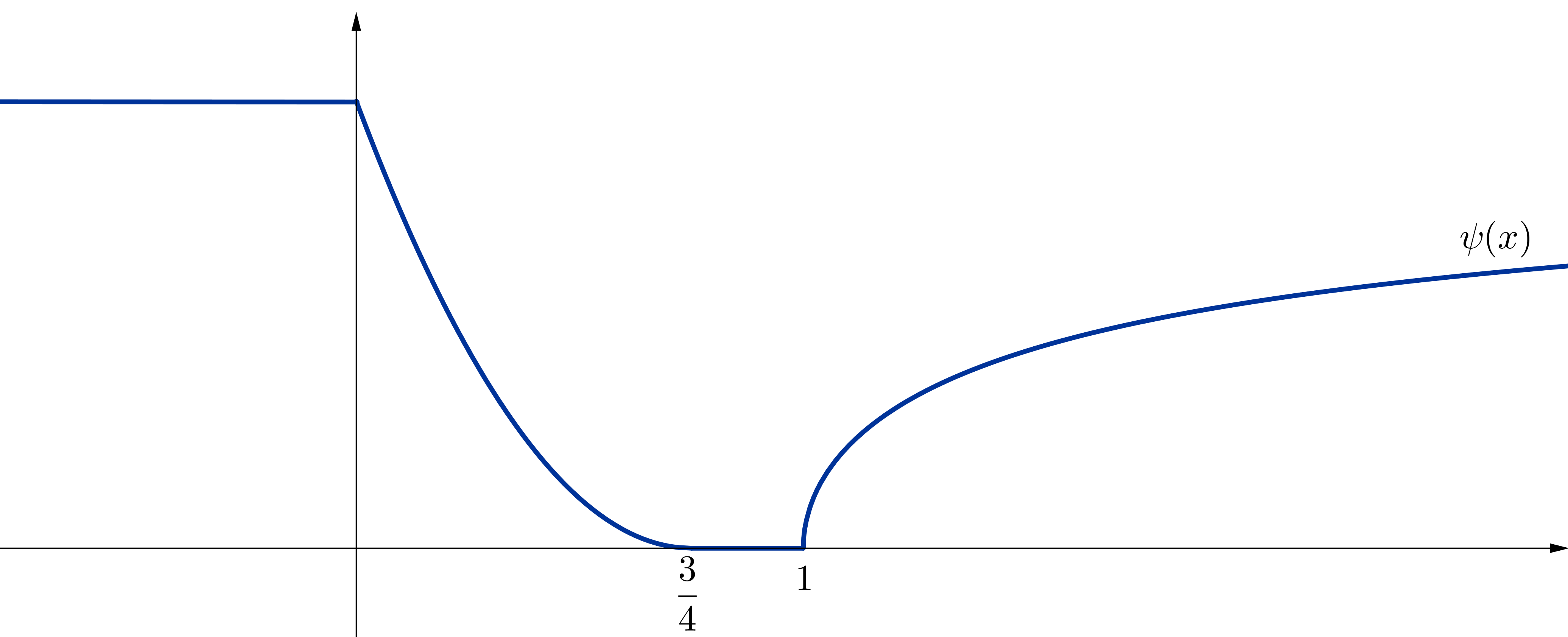}
	\caption{A Caputo-stationary function in $(1,\infty)$ prescribed on $(-\infty,1]$}   
	\label{fign:es2}
	\end{minipage}
\end{figure} 
\end{center}	

\bibliography{biblio}

\begin{thebibliography}{1}

\bibitem{AS64}
Milton {Abramowitz} and Irene A.~{Stegun} (Ed.).
\newblock {Handbook of mathematical functions with formulas, graphs, and
  mathematical tables. Reprint of the 1972 ed.}
\newblock {A Wiley-Interscience Publication. Selected Government Publications.
  New York: John Wiley \& Sons, Inc; Washington, D.C.: National Bureau of
  Standards. xiv, 1046 pp.; {\$} 44.95 (1984).}, 1984.

\bibitem{C67}
Michele Caputo.
\newblock Linear models of dissipation whose {$Q$} is almost frequency
  independent. {II}.
\newblock {\em Fract. Calc. Appl. Anal.}, 11(1):4--14, 2008.
\newblock Reprinted from Geophys. J. R. Astr. Soc. {{\bf{1}}3} (1967), no. 5,
  529--539.

\bibitem{DNPV12}
Eleonora Di~Nezza, Giampiero Palatucci, and Enrico Valdinoci.
\newblock Hitchhiker's guide to the fractional {S}obolev spaces.
\newblock {\em Bull. Sci. Math.}, 136(5):521--573, 2012.

\bibitem{DSV14}
Serena Dipierro, Ovidiu Savin, and Enrico Valdinoci.
\newblock All functions are locally $ s $-harmonic up to a small error.
\newblock {\em arXiv preprint arXiv:1404.3652}, 2014.
\newblock To appear on J. Eur. Math. Soc. (JEMS).

\bibitem{KST06}
A.~Anatolii~Aleksandrovich Kilbas, Hari~Mohan Srivastava, and Juan~J. Trujillo.
\newblock {\em Theory and applications of fractional differential equations},
  volume 204.
\newblock Elsevier Science Limited, 2006.

\bibitem{MR93}
Kenneth~S. Miller and Bertram Ross.
\newblock {\em An introduction to the fractional calculus and fractional
  differential equations}.
\newblock Wiley New York, 1993.

\bibitem{SK93}
Stefan~G. Samko, Anatoly~A. Kilbas, and Oleg~I. Marichev.
\newblock Fractional integrals and derivatives: Theory and applications. 1993.
\newblock {\em Gordon and Breach, Yverdon}.

\bibitem{zygmund}
Richard~L Wheeden and Antoni Zygmund.
\newblock {\em Measure and Integral: An Introduction to Real Analysis}.
\newblock CRC Press, 1977.

\end{thebibliography}
\bibliographystyle{plain}

\end{document}